\numberwithin{equation}{section}
\DeclareSymbolFont{cyrletters}{OT2}{wncyr}{m}{n}
\DeclareMathSymbol{\Sha}{\mathalpha}{cyrletters}{"58}
\newcommand{\Selg}{\operatorname{Sel}_{p^{\infty}}(g/\Q_{\infty})}
\newcommand{\Selpm}{\operatorname{Sel}_{p^{\infty}}^{\pm}(g/\Q_{\infty})}
\newcommand{\Z}{\mathbb{Z}}
\newcommand{\p}{\mathfrak{p}}
\newcommand{\Q}{\mathbb{Q}}
\newcommand{\g}{\operatorname{Ad}^0\bar{\rho}}
\newcommand{\op}[1]{\operatorname{#1}}
\newcommand{\F}{\mathbb{F}}
\newcommand\mtx[4] { \left( {\begin{array}{cc}
   #1 & #2 \\
   #3 & #4 \\
  \end{array} } \right)}
\theoremstyle{plain}
 \theoremstyle{definition}
\newtheorem{Th}{Theorem}[section]
\newtheorem{Lemma}[Th]{Lemma}
\newtheorem{convention}[Th]{Convention}
\newtheorem{hypothesis}[Th]{Hypothesis}
\newtheorem{Corollary}[Th]{Corollary}
\newtheorem{Proposition}[Th]{Proposition}
\newtheorem{Remark}[Th]{Remark}
 \theoremstyle{definition}
\newtheorem{Definition}[Th]{Definition}
\newtheorem{Conjecture}[Th]{Conjecture}
\begin{document}

\title{Constructing Galois representations with large Iwasawa {\large$\lambda$}-Invariant}
\author{Anwesh Ray}
\address{Anwesh Ray\newline Centre de recherches mathématiques,
Université de Montréal,
Pavillon André-Aisenstadt,
2920 Chemin de la tour,
Montréal (Québec) H3T 1J4, Canada}

\begin{abstract}
Let $p\geq 5$ be a prime. We construct modular Galois representations for which the $\Z_p$-corank of the $p$-primary Selmer group (i.e., $\lambda$-invariant) over the cyclotomic $\Z_p$-extension is large. More precisely, for any natural number $n$, one constructs a modular Galois representation such that the associated $\lambda$-invariant is $\geq n$. The method is based on the study of congruences between modular forms, and leverages results of Greenberg and Vatsal. Given a modular form $f_1$ satisfying suitable conditions, one constructs a congruent modular form $f_2$ for which the $\lambda$-invariant of the Selmer group is large. A key ingredient in acheiving this is the Galois theoretic lifting result of Fakruddin-Khare-Patrikis, which extends previous work of Ramakrishna. The results are subject to certain additional hypotheses, and are illustrated by explicit examples. 

\medskip

\noindent\textsc{R\'esum\'e.} Soit $p\geq 5$ un nombre premier. Nous construisons des repr\'esentations galoisiennes modulaires pour lesquelles le $\Z_p$-corank du groupe de Selmer $p$-primary (i.e., $\lambda$-invariant) sur l'extension cyclotomique $\Z_p$-est grand. Plus pr\'ecis\'ement, pour tout entier naturel $n$, on construit une repr\'esentation galoisienne modulaire telle que l'invariant $\lambda$ associ\'e soit $\geq n$. La m\'ethode est bas\'ee sur l'\'etude des congruences entre les formes modulaires, et s'appuie sur les r\'esultats de Greenberg et Vatsal. \'Etant donn\'e une forme modulaire $f_1$ satisfaisant des conditions convenables, on construit une forme modulaire congruente $f_2$ pour laquelle le $\lambda$-invariant du groupe de Selmer est grand. Un ingr\'edient cl\'e pour y parvenir est le résultat de levage th\'eorique de Galois de Fakruddin-Khare-Patrikis, qui prolonge les travaux ant\'erieurs de Ramakrishna. Les r\'esultats sont soumis \`a certaines hypothèses suppl\'ementaires, et sont illustr\'es par des exemples explicites.

\end{abstract}

\maketitle
\section{Introduction}
\par  The Iwasawa theory of elliptic curves was initiated by B.Mazur in \cite{mazur} who studied the growth of the rank of an elliptic curve up certain infinite towers of number fields. It was proven by K.~Kato \cite{kato} and D.~Rohlrich \cite{rohlrich} that the Mordell--Weil rank of a rational elliptic curve in the number fields contained in the cyclotomic $\Z_p$-extension of $\Q$ is bounded. The bound is given in terms of a certain Iwasawa invariant, known as the $\lambda$-invariant. The behaviour and properties of such Iwasawa invariants has been the subject of much conjecture and contemplation. R.~Greenberg conjectured that the Iwasawa $\mu$-invariant must vanish when the residual representation is irreducible and proved in \cite{iecgreenberg} that the Iwasawa $\lambda$-invariant of a rational elliptic curve may be arbitrarily large, provided the $\mu=0$ conjecture holds. This result may be viewed as an Iwasawa theoretic analogue of the \textit{rank boundedness conjecture} for elliptic curves (see \cite{parketal} for a survey), and the proof crucially relies on the arithmetic geometry of elliptic curves. More recently, there has been growing interest in the Iwasawa theory of modular forms and their associated Galois representations. Such representations include (but are not limited to) those arising from elliptic curves. Iwasawa invariants for modules arising from modular Galois representations provide key insights into the arithmetic of such objects. Let $f$ be a Hecke eigencuspform and $p$ a prime number. The Selmer group associated to $f$ is defined over the cyclotomic $\Z_p$-extension $\Q_{\infty}$ of $\Q$, and the algebraic structure of this Selmer group is of significant interest in Iwasawa theory. The Iwasawa $\lambda$-invariant is the $\Z_p$-rank of its Pontryagin dual.
\par In this manuscript, the aforementioned result of Greenberg is generalized. The constructions apply not only to elliptic curves, but modular forms and their associated Galois representations. A Hecke eigencuspform that coincides with a rational elliptic curve must be of weight $2$, with rational Fourier coefficients. The geometric tools at one's disposal when working with elliptic curves are no longer available in the general setting. Second, the results of Greenberg apply to $p$-ordinary elliptic curves, while the constructions in this paper apply to both $p$-ordinary and $p$-supersingular elliptic curves. In the supersingular case, the Selmer groups are replaced by the natural analogs, known as signed Selmer groups. These were first introduced by S.~Kobayashi \cite{kobayashi} for elliptic curves, and later generalized to modular forms by A.~Lei in \cite{Lei}. For the prime $p=3$, B.~D.~Kim in \cite{Kim} studied a related question for Kobayashi's plus and minus Selmer groups associated to $p$-supersingular elliptic curves. Such results were proved by analyzing the variation of $\lambda$-invariants in families of elliptic curves constructed by K.~Rubin and A.~Silverberg (see \cite{rubinsilver}). These elliptic curves have constant mod-$p$ Galois representation.
\par The new technique we introduce in order to study this problem in Iwasawa theory is based on Galois deformation theory, and has its origin in the Serre's conjecture. The technique was initially pioneered by R. Ramakrishna, who proved a geometric lifting theorem in \cite{ravi1, ravi2}, which provided crucial evidence for Serre's conjecture. In this paper, we use Galois deformation theory to show that certain residual representations
\[\bar{\rho}:\op{Gal}(\bar{\Q}/\Q)\rightarrow \op{GL}_2(\bar{\F}_p)\]lift to modular Galois representations with large $\lambda$-invariant (see Theorem \ref{th52}). There are a number of hypotheses on the residual representation. We crucially use the results of R.~Taylor \cite{taylor} and M.~Kisin \cite{kisin} on the Fontaine-Mazur conjecture, and our assumptions on $\bar{\rho}$ must account for an exceptional case in which the Fontaine-Mazur conjecture is not proved. We use the result of Fakhruddin-Khare-Patrikis \cite[Theorem A]{FKP} which gives some control on the local representations at a set of primes away from $\{p\}$. The reader is referred to Theorems \ref{techtheorem} and \ref{th52} for precise statements. Results are proved not only for $p$-ordinary Galois representations but also for $p$-crystalline non-ordinary Galois representations as well, and they apply to all primes $p\geq 5$. In the special case when the prime is non-ordinary, the weight of the modular form is $2$, and the $p$-th Fourier coefficient is non-zero, we need to additionally assume that the relevant signed dual Selmer groups are torsion over the Iwasawa algebra. The Theorem \ref{lastresult} constructs an explicit example of a compatible system of Galois representations to which the main results apply.
\par The paper consists of 5 sections. Preliminary notions are introduced in section $\ref{s2}$ and we define the various Selmer groups considered in this paper. In section $\ref{s3}$, results of Greenberg-Vatsal \cite{greenbergvatsal}, B.D.Kim \cite{Kim} and Hatley-Lei \cite{hatleylei} on congruent Galois representations are discussed. In section $\ref{s4}$, a residual representation $\bar{\rho}$ is lifted to a characteristic zero modular Galois representation so that it may satisfy favorable conditions. These conditions imply that the $\lambda$-invariant of the associated Selmer group is large. In section $\ref{s5}$, the main results of this paper are proved.

\section{Preliminary notions}\label{s2}
\par In this section, we introduce notation and discuss some preliminary notions. First, we begin with some standard notation. At each prime number $v$, we fix an embedding $\iota_v:\bar{\Q}\hookrightarrow\bar{\Q}_v$. Denote by $\op{G}_v$ the Galois group $\op{Gal}(\bar{\Q}_v/\Q_v)$ and note that the embedding $\iota_v$ gives rise to an inclusion of $\op{G}_v$ into the absolute Galois group $\op{Gal}(\bar{\Q}/\Q)$. Fix an odd prime number $p$ and a normalized Hecke eigencuspform $g$ of weight $k\geq 2$ on $\Gamma_1(N)$. Associated to $g$ is the Galois representation \[\rho_g:\op{Gal}(\bar{\Q}/\Q)\rightarrow \op{GL}_2(\bar{\Q}_p),\] see \cite{deligne} for details. Since $\op{Gal}(\bar{\Q}/\Q)$ is compact, the image of $\rho_g$ is contained in $\op{GL}_2(K)$, for a finite extension $K$ of $\Q_p$. We set $\mathcal{O}$ to be the valuation ring of $K$ and let $\F$ be its residue field. Denote by $\op{V}_g$ the underlying $K$-vector space on which the Galois group $\op{Gal}(\bar{\Q}/\Q)$ acts via $\rho_g$ and choose a Galois stable $\mathcal{O}$-lattice $\op{T}_g$ contained in $\op{V}_g$. We shall simply denote the integral representation on $\op{T}_g$ by
\[\rho_g:\op{Gal}(\bar{\Q}/\Q)\rightarrow \op{GL}_2(\mathcal{O})\] for ease of notation. Let $\op{W}_g$ be the $p$-divisible Galois module $\op{V}_g/\op{T}_g$. At each prime $v$, the restriction of $\rho_g$ to $\op{G}_v$ is denoted $\rho_{g|v}$. Recall that $g$ is $p$-\textit{ordinary} if its $p$-th Fourier coefficient is a $p$-adic unit. We shall require the following simplifying hypothesis.\begin{hypothesis}
We say that $g$ satisfies $(\star)$ at $p$ if at least one of the following conditions is satisfied:
\begin{enumerate}
    \item $g$ is $p$-ordinary,
    \item $p\nmid N$ and $p\geq k$.
\end{enumerate}

\end{hypothesis}

Assume that $g$ satisfies $(\star)$. Let $I_p$ denote the inertia subgroup of $\op{G}_p$. Set $\chi:\op{G}_{\Q, \{p\}}\rightarrow \op{GL}_1(\Z_p)$ to denote the cyclotomic character. Then, if $g$ is $p$-ordinary, we have that
\[\rho_{g|I_p}=\mtx{\chi^{k-1}}{\ast}{0}{1}.\] If $g$ is not $p$-ordinary, we have assumed that $p\nmid N$. It is well known that in this case, the local representation $\rho_{g|p}$ is crystalline.
\par We define Selmer groups over the \textit{cyclotomic $\Z_p$-extension} of $\Q$. For $n\geq 0$, let $\Q_n$ be the subfield of $\Q(\mu_{p^{n+1}})$ degree $p^n$ over $\Q$. Note that $\Q_n$ is contained in $\Q_{n+1}$. Let $\Q_{\infty}$ be the union \[\Q_{\infty}:=\bigcup_{n\geq 0} \Q_n\]and set $\Gamma:=\op{Gal}(\Q_{\infty}/\Q)$. Note that there are isomorphisms of topological groups \[\op{Gal}(\Q_{\infty}/\Q)\xrightarrow{\sim} \varprojlim_n\op{Gal}(\Q_{n}/\Q)\xrightarrow{\sim} \Z_p.\] The extension $\Q_{\infty}$ is the cyclotomic $\Z_p$-extension of $\Q$ and $\Q_n$ is its \textit{$n$-th layer}. Choose a topological generator $\gamma\in \Gamma$ and fix an isomorphism $\Z_p\xrightarrow{\sim} \Gamma$ sending $a$ to $\gamma^a$. The Iwasawa algebra $\Lambda$ is defined as the following inverse limit
\[\Lambda:=\varprojlim_n \Z_p[\op{Gal}(\Q_n/\Q)].\] Fix an isomorphism of $\Lambda$ with the ring of formal power series $\Z_p\llbracket T\rrbracket$, by identifying $\gamma-1$ with $T$.
\par Fix a finite set of primes $S$ containing the primes dividing $Np$. Note that $\rho_g$ is unramified at all primes $v\notin S$. Set $\Q_S$ to denote the maximal algebraic extension of $\Q$ which is unramified at all primes outside $S$, and let $\op{G}_{\Q,S}$ be the Galois group $\op{Gal}(\Q_S/\Q)$.
\par First, consider the case when $g$ is ordinary at $p$. For a number field $L$ contained in $\Q_{\infty}$, set 
\[\mathcal{H}_p(g/L):=\bigoplus_{\eta|p} \frac{H^1(L_{\eta}, \op{W}_g)}{H^1_f(L_{\eta}, \op{W}_g)},\] where $H^1_f(L_{\eta}, \op{W}_g)$ is the \textit{Bloch-Kato} condition introduced in \cite{blochkato}. The reader may also refer to \cite[Definition 2.1]{ochiai}. Here, the set of primes $\eta$ range over the primes $\eta|p$ of $L$ above $p$. Denote by $\mathcal{H}_p(g/\Q_{\infty})$ the direct limit
\[\mathcal{H}_p(g/\Q_{\infty}):=\varinjlim_n \mathcal{H}_p(g/\Q_{n}).\]Following \cite[section 3]{greenbergvatsal}, for $v\neq p$, let 
\[\mathcal{H}_p(g/\Q_{\infty}):=\bigoplus_{\eta|v}\op{im}\left\{H^1(\Q_{\infty, \eta}, \op{W}_g)\rightarrow H^1(\op{I}_{\eta}, \op{W}_g) \right\}.\] In the above formula, $\eta$ runs through all primes of $\Q_{\infty}$ dividing $v$ and $\op{I}_{\eta}$ is the inertia group of $\op{Gal}(\bar{\Q}_{\infty, \eta}/\Q_{\infty, \eta})$. The $p$-primary Selmer group is the kernel of the restriction map
\[\Selg:=\op{ker}\left\{H^1(\Q_S/\Q_{\infty},\op{W}_g)\rightarrow \bigoplus_{v\in S} \mathcal{H}_v(g/\Q_{\infty})\right\}.\] Its Pontryagin dual 
\[\Selg^{\vee}:=\op{Hom}\left(\Selg, \Q_p/\Z_p\right)\] is finitely generated $\Lambda$-module. Furthermore, Kato \cite{kato} proved that the dual Selmer group $\Selg^{\vee}$ is a torsion $\Lambda$-module.
\par Next, consider the case when $g$ is not $p$-ordinary. In this setting, $\Selg^{\vee}$ is not torsion, see \cite[Proposition 6.3]{Lei} for details. Instead of working with Selmer groups that are not cotorsion, we work with \textit{signed Selmer groups} $\op{Sel}_{p^{\infty}}^{+} (g/\Q_{\infty})$ and $\op{Sel}_{p^{\infty}}^{-} (g/\Q_{\infty})$. These Selmer groups were introduced by Kobayashi in \cite{kobayashi} for $p$-supersingular elliptic curves, and their definition has been generalized for modular Galois representations by Lei in \cite{Lei}. Assume that $g$ is not $p$-ordinary. For a number field $L$ contained in $\Q_{\infty}$ set
\[\mathcal{H}_v^{\pm}(g/L):=\bigoplus_{\eta|v} \frac{H^1(L_{\eta}, \op{W}_g)}{H^1_f(L_{\eta}, \op{W}_g)^{\pm}},\] where $H^1_f(L_{\eta}, \op{W}_g)^{\pm}$ is defined in \cite{Lei}. The signed Selmer groups are defined as follows
\[\Selpm:=\op{ker}\left\{H^1(\Q_S/\Q_{\infty},\op{W}_g)\rightarrow \mathcal{H}_p^{\pm}(g/\Q_{\infty})\oplus \left(\bigoplus_{v\in S\backslash \{p\}} \mathcal{H}_v(g/\Q_{\infty})\right)\right\}.\] The Pontryagin dual 
$\Selpm^{\vee}$ is a finitely generated and $\Lambda$-module. For our results to hold, it is necessary to assume that $\Selpm^{\vee}$ is in fact torsion as a $\Lambda$-module. 
\begin{hypothesis}\label{hypothesis torsion}
Assume that the dual Selmer groups $\Selpm^{\vee}$ are torsion as $\Lambda$-modules.
\end{hypothesis}
The above hypothesis is known to be satisfied in various cases. Let $a_p(g)$ denote the Fourier coefficient of $g$ at $p$. For instance, if either $k\geq 3$ or if $a_p(g)=0$, then the above Hypothesis is known to hold, cf. \cite[Theorem 1.4]{LLZ}, \cite[Proposition 6.4]{Lei}. On the other hand, in the case when $k=2$ and $g$ is known to arise from an elliptic curve, then, the torsion property is studied by Sprung, cf. \cite[Theorem 7.14]{sprung}.
\par We introduce the Iwasawa invariants that are studied in this manuscript. Let $\rm{M}$ be a finitely generated torsion $\Lambda$-module.
By the well known structure theorem of $\Lambda$-modules, there is a map of
\[
\textrm{M}\longrightarrow  \left(\bigoplus_{i=1}^a \Lambda/(p^{\mu_i})\right)\oplus \left(\bigoplus_{j=1}^b \Lambda/(f_j(T)) \right)
\]
with finite kernel and cokernel.
Here, $\mu_i>0$ and $f_j(T)$ is a monic polynomial with non-leading coefficients divisible by $p$.
The \textit{$\mu$-invariant} and \textit{$\lambda$-invariant} of $\rm{M}$ are defined as follows
\[
\begin{split}
&\mu(\textrm{M}):=\begin{cases}0 & \textrm{ if } a=0\\
\sum_{i=1}^a \mu_i & \textrm{ if } a>0.
\end{cases}\\
&\lambda(\textrm{M}) := 
\begin{cases}0 & \textrm{ if } b=0\\
\sum_{j=1}^b \deg f_j(T) & \textrm{ if } b>0.
\end{cases}
\end{split}
\]
In order to simultaneously state our results with ease, we introduce the following convention.
\begin{convention}\label{convention}
Let $g$ be a Hecke eigencuspform satisfying $(\star)$ at $p$. Then, if $g$ is $p$-ordinary, the Selmer groups $\op{Sel}_{p^{\infty}}^+(g/\Q_{\infty})$ and $\op{Sel}_{p^{\infty}}^-(g/\Q_{\infty})$ shall both simply denote the classical Selmer group $\op{Sel}_{p^{\infty}}(g/\Q_{\infty})$. Furthermore, $\mathcal{H}_p^{\pm}(g/\Q_{\infty})$ shall denote $\mathcal{H}_p(g/\Q_{\infty})$.
\end{convention}
Assume that $g$ if is $p$-supersingular with $k=2$ and $a_p(g)\neq 0$, then, Hypothesis \ref{hypothesis torsion} is satisfied. Denote by $\mu^{\pm}(g)$ and $\lambda^{\pm}(g)$ the $\mu$ and $\lambda$-invariants of $\Selpm$ respectively. Note that in accordance with the convention above, \[\mu^{\pm}(g):=\mu(\Selg)\text{ and }\lambda^{\pm}(g):=\lambda(\Selg)\] when $g$ is $p$-ordinary. In the above formulae, the dependence on the prime "$p$" is suppressed. When $p$ is fixed, we shall use $\mu^{\pm}$ and $\lambda^{\pm}$, and we emphasize the dependence on "$p$" depending on the context by using $\mu_p^{\pm}$ and $\lambda_p^{\pm}$. The following generalizes a conjecture due to Greenberg \cite[Conjecture 1.11]{iecgreenberg} and is backed up by computational evidence.
\begin{Conjecture}
Let $g$ be a Hecke eigencuspform satisfying condition $(\star)$ and $\ddag\in \{+,-\}$ a choice of sign. If the residual representation $\bar{\rho}_g$ is irreducible, then $\mu_p^{\ddag}(g)=0$.
\end{Conjecture}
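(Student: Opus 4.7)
The plan is to exploit the congruence invariance of Iwasawa $\mu$-invariants developed in Section \ref{s3} (due to Greenberg-Vatsal in the ordinary case, and B.D.~Kim and Hatley-Lei in the signed supersingular case). Since $\bar{\rho}_g$ is assumed irreducible, these comparison results say that $\mu_p^{\ddag}(g)$ depends only on the pair consisting of $\bar{\rho}_g$ together with the ramification type at primes in $S\setminus\{p\}$. Thus the conjecture reduces to exhibiting, for each such residual datum, at least one Hecke eigenform $g'$ with $\bar{\rho}_{g'}\simeq \bar{\rho}_g$ for which $\mu_p^{\ddag}(g')=0$ can be verified by independent means.

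To produce such a model eigenform $g'$, the approach would be to combine the Fakhruddin-Khare-Patrikis lifting theorem (already central to this paper) with arithmetic input from the cyclotomic Iwasawa Main Conjecture. The lifting theorem gives a modular lift of $\bar{\rho}_g$ with prescribed local behavior at a chosen finite auxiliary set of primes, which one can use to arrange that $g'$ has a tractable $p$-adic $L$-function (for instance, one with small local Euler factors and ordinary behavior). One then invokes Kato's divisibility, together with the reverse divisibility of Skinner-Urban and Wan where available, to identify the algebraic $\mu$-invariant of $\op{Sel}_{p^{\infty}}^{\ddag}(g'/\Q_{\infty})^{\vee}$ with the $\mu$-invariant of the appropriate (signed, if needed) $p$-adic $L$-function. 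The final step would be to prove that this analytic $\mu$-invariant vanishes, in the spirit of Ferrero-Washington and of Vatsal's anticyclotomic vanishing theorem, by a $q$-expansion / modular-symbols computation modulo $p$.

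The main obstacle, and the reason the statement is phrased as a conjecture, is precisely this last step: controlling cyclotomic analytic $\mu$-invariants for modular forms is genuinely open. Vatsal's method applies to anticyclotomic rather than cyclotomic $L$-functions, and Ferrero-Washington applies to abelian characters, neither of which directly covers an irreducible two-dimensional residual representation twisted along the cyclotomic tower. In the supersingular case, one must additionally establish the relevant main conjecture for signed $p$-adic $L$-functions of Pollack-Kobayashi type, which is currently known only under restrictive hypotheses. A realistic intermediate target is therefore a \emph{conditional} theorem: the conjecture holds whenever the appropriate main conjecture is known and a Vatsal-type vanishing statement is available for the associated modular symbols. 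Numerical verification in individual examples (in the spirit of Theorem \ref{lastresult}) would lend further empirical support, but a full unconditional proof appears out of reach with current technology.
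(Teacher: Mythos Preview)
The statement you are attempting to prove is labelled \emph{Conjecture} in the paper, and the paper does not supply a proof; it is presented as a generalization of Greenberg's $\mu=0$ conjecture, supported only by computational evidence. So there is no ``paper's own proof'' to compare against.

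You recognize this yourself in your final paragraph, which is good. But the earlier paragraphs are written as if a proof strategy is being executed, which is misleading. A few specific comments on the outline you give. First, your opening claim that $\mu_p^{\ddag}(g)$ depends on $\bar{\rho}_g$ \emph{together with the ramification type at primes in $S\setminus\{p\}$} slightly overstates the dependence: Theorem~\ref{th35} already says that the \emph{vanishing} of $\mu^{\ddag}$ depends only on the residual representation, with no auxiliary local data needed. Second, the idea of using a lifting theorem to produce a congruent form $g'$ with a ``tractable'' $p$-adic $L$-function does not obviously help: the analytic $\mu$-invariant is itself a residual invariant (by the same congruence philosophy on the analytic side), so passing to $g'$ does not make the Ferrero--Washington--type step any easier. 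In other words, your reduction is circular at the crucial point. Third, invoking the main conjecture to transfer between algebraic and analytic $\mu$-invariants is reasonable in principle, but as you note, both the main conjecture in the required generality and the analytic $\mu=0$ statement are open; your proposal therefore amounts to replacing one open problem by two.

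In short: there is no gap to diagnose in a nonexistent proof, and your write-up is best read not as a proof proposal but as a (largely accurate) explanation of why the statement remains conjectural.
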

\section{Congruent Galois Representations}\label{s3}
\par Let $\tau$ be a complex number in the upper half plane and set $q:=e^{2\pi i \tau}$. Let $f_1=\sum_{n=1}^{\infty} a_n(f_1)q^n$ and $f_2=\sum_{n=1}^{\infty} a_n(f_2)q^n$ be normalized Hecke eigencuspforms and $L$ the number field generated by the Fourier coefficients of $f_1$ and $f_2$. The Fourier coefficients $a_n(f_i)$ are all contained in the ring of integers $\mathcal{O}_L$. Let $\mathfrak{p}$ be a prime ideal in $\mathcal{O}_L$ such that $\mathfrak{p}|p$.
\begin{Definition}We say that $f_1$ and $f_2$ are congruent modulo $\mathfrak{p}$ if for all but finitely many primes $\ell$,
\[a_{\ell}(f_1)\equiv a_{\ell}(f_2)\mod{\p}.\]
\end{Definition}
Let $f_1$ and $f_2$ be $\p$-congruent and $K$ denote the completion $L_{\p}$. Assume that both $f_1$ and $f_2$ satisfy $(\star)$ at $p$. Let $\mathcal{O}$ be the valuation ring of $K$, $\varpi$ the uniformizer of $\mathcal{O}$ and $\F$ the residue field $\mathcal{O}/\varpi$. Let
\[\bar{\rho}_{f_i}:\op{Gal}(\bar{\Q}/\Q)\rightarrow \op{GL}_2(\F)\] denote the mod-$\varpi$ reduction of $\rho_{f_i}$. Since $f_1$ and $f_2$ are congruent modulo $\mathfrak{p}$ it follows that their residual representations are isomorphic up to semisimplification. Denote by $N_i$ the level of $f_i$ and by $\Sigma$ the set of primes $v$ such that $v|N_1N_2p$. Let $\ddag$ be a choice of sign $+$ or $-$. Note that if $f_1$ (or equivalently, $f_2$) is $p$-ordinary, the Selmer group $\op{Sel}_{p^{\infty}}^{\ddag}(f_i/\Q_{\infty})$ is simply taken to be $\op{Sel}_{p^{\infty}}(f_i/\Q_{\infty})$.
\begin{hypothesis}\label{mainhyp}
We make the following assumptions:
\begin{enumerate}
    \item both $f_1$ and $f_2$ satisfy $(\star)$ at $p$,
    \item Hypothesis \ref{hypothesis torsion} is satisfied for both $f_1$ and $f_2$,
    \item $\bar{\rho}_{f_1}$ (or equivalently $\bar{\rho}_{f_2}$) is irreducible.
\end{enumerate}
\end{hypothesis}
\par As a result of the second assumption, it follows that $\bar{\rho}_{f_1}$ is isomorphic to $\bar{\rho}_{f_2}$.
\begin{Lemma}\label{congruencelemma}
Let $f_1$ and $f_2$ be as above. Then $f_1$ is $p$-ordinary if and only if $f_2$ is $p$-ordinary.
\end{Lemma}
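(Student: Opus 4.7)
The plan is to show that under the congruence $f_1\equiv f_2\pmod{\mathfrak{p}}$ together with Hypothesis \ref{mainhyp}, $p$-ordinariness is an intrinsic property of the residual representation restricted to the decomposition group at $p$. First I would upgrade the mod-$\mathfrak{p}$ congruence to a Galois-theoretic isomorphism: Brauer--Nesbitt applied to the semisimple residual representations, combined with the irreducibility of $\bar{\rho}_{f_1}$, turns the identity $a_\ell(f_1)\equiv a_\ell(f_2)\pmod{\mathfrak{p}}$ (for all but finitely many $\ell$) into an honest isomorphism $\bar{\rho}_{f_1}\cong\bar{\rho}_{f_2}$, and in particular $\bar{\rho}_{f_1}|_{G_p}\cong\bar{\rho}_{f_2}|_{G_p}$.

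It then suffices to characterise $p$-ordinariness purely in terms of $\bar{\rho}_g|_{G_p}$ for any $g$ satisfying $(\star)$. If $g$ is $p$-ordinary of weight $k$, then the local shape
\[\rho_{g|I_p}\cong \mtx{\chi^{k-1}}{\ast}{0}{1}\]
recorded in Section~\ref{s2} reduces mod $\varpi$ to a reducible representation; equivalently, $\bar{\rho}_{g}|_{G_p}$ admits a one-dimensional unramified quotient on which Frobenius acts by $\bar{a}_p(g)\neq 0$. If instead $g$ is non-$p$-ordinary, then the second clause of $(\star)$ forces $p\nmid N$ and $2\leq k\leq p$, so $\rho_{g|G_p}$ is crystalline with Hodge--Tate weights $(0,k-1)$ in the Fontaine--Laffaille range. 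The classification of two-dimensional crystalline mod-$p$ representations in this range then implies that, in the non-ordinary case, $\bar{\rho}_{g}|_{I_p}$ is \emph{irreducible}, given by the two fundamental characters of level $2$ raised to suitable powers.

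Assembling these, suppose for contradiction that $f_1$ is $p$-ordinary while $f_2$ is not. Then $\bar{\rho}_{f_1}|_{I_p}$ is reducible whereas $\bar{\rho}_{f_2}|_{I_p}$ is irreducible, contradicting the isomorphism deduced in the first step. The main technical hurdle is the non-ordinary half of the dichotomy: justifying that a crystalline representation of weight $k\leq p$ with non-ordinary reduction must be residually irreducible on inertia. This is standard in $p$-adic Hodge theory but requires invoking Fontaine--Laffaille classification (or Berger's analysis of irreducible crystalline representations) rather than reproducing it. A cleaner shortcut is available when $p\nmid N_1N_2$: in that case $a_p(f_i)=\operatorname{tr}\rho_{f_i}(\op{Frob}_p)$ is directly a Galois-theoretic invariant of $\bar{\rho}_{f_i}|_{G_p}$, so the congruence yields $\bar{a}_p(f_1)=\bar{a}_p(f_2)$ and hence simultaneous (non-)ordinariness; the Galois-theoretic argument above then handles the mixed case where $p$ divides exactly one of the levels.
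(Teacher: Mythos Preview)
Your core argument is correct and coincides with the paper's: both invoke the well-known dichotomy that $\bar{\rho}_{f_i}|_{I_p}$ is given by fundamental characters of level one when $f_i$ is $p$-ordinary and of level two otherwise (the paper simply cites Ribet--Stein for this and says nothing more). However, the ``cleaner shortcut'' in your final paragraph is wrong: even when $p\nmid N_i$, the representation $\rho_{f_i}|_{G_p}$ is crystalline but certainly not unramified (its Hodge--Tate weights are $0$ and $k-1\geq 1$), so there is no Galois-theoretic element $\op{Frob}_p$ on which to evaluate $\rho_{f_i}$, and the formula $a_p(f_i)=\operatorname{tr}\rho_{f_i}(\op{Frob}_p)$ is meaningless; $a_p$ is rather the trace of the crystalline Frobenius on $D_{\mathrm{cris}}$, which does not reduce to a mod-$\mathfrak{p}$ Galois invariant in the way you suggest. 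Fortunately this shortcut is unnecessary, since your Fontaine--Laffaille argument already handles all cases uniformly.
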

\begin{proof}
As is well known, $\bar{\rho}_{f_i|I_p}$ acts by fundamental characters of level one when $f_i$ is $p$-ordinary, and by fundamental characters of level two otherwise. The reader may refer to the discussion in \cite[section 2.1]{ribetstein} for instance. The assertion of the Lemma follows.
\end{proof}
We introduce imprimitive Selmer groups which will play a crucial role in what follows. Recall Convention $\ref{convention}$ which unifies notation in what follows. For $i=1,2$, the imprimitive Selmer groups associated to $f_i$ are defined as follows
\[\op{Sel}_{p^{\infty}}^{\ddag,\Sigma}(f_i/\Q_{\infty}):=\op{ker} \left\{ H^1(\Q_{\Sigma}/\Q_{\infty}, \op{W}_{f_i})\xrightarrow{\op{res}_p} \mathcal{H}_p^{\ddag}(f_i/\Q_{\infty}) \right \}.\]
Denote by $\mu^{\ddag,\Sigma}(f_i)$ and $\lambda^{\ddag,\Sigma}(f_i)$ the $\mu$ and $\lambda$-invariants of $\op{Sel}_{p^{\infty}}^{\ddag, \Sigma}(f_i/\Q_{\infty})$ respectively. Recall that the primitive Selmer group $\op{Sel}_{p^{\infty}}^{\ddag}(f_i/\Q_{\infty})$ is the kernel of the restriction map
\[\op{res}_{\Sigma}:H^1(\Q_{\Sigma}/\Q_{\infty}, \op{W}_{f_i})\rightarrow \left(\bigoplus_{v\in \Sigma\backslash \{p\}}\mathcal{H}_v(f_i/\Q_{\infty})\right) \oplus \mathcal{H}_p^{\ddag}(f_i/\Q_{\infty}).\] Since $\op{Sel}_{p^{\infty}}(f_i/\Q_{\infty})^{\vee}$ is a torsion $\Lambda$-module, it follows that the above defining map $\op{res}_{\Sigma}$ is surjective, see \cite[Proposition 2.4]{ponsinet} or \cite[Corollary 2.9]{hatleylei} for further details. As a result, there is a short exact sequence
\begin{equation}\label{ses} 0\rightarrow \op{Sel}_{p^{\infty}}^{\ddag}(f_i/\Q_{\infty})\rightarrow \op{Sel}_{p^{\infty}}^{\ddag,\Sigma}(f_i/\Q_{\infty})\\
\rightarrow \bigoplus_{v\in \Sigma\backslash \{p\}} \mathcal{H}_v(f_i/\Q_{\infty})\rightarrow 0.\end{equation}
For a prime $v\neq p$, denote by $\mu_v(f_i)$ (resp. $\lambda_v(f_i)$) the $\mu$-invariant (resp. $\lambda$-invariant) of $\mathcal{H}_v(f_i/\Q_{\infty})^{\vee}$. Let $\op{I}_v$ be the inertia subgroup of $\op{G}_{v}$ and set $\op{V}_{f_i}':=\op{V}_{f_i}|_{\op{I}_v}$ to be the maximal quotient on which $\op{I}_v$ acts trivially. Denote by $P_v(X)$ the characteristic polynomial of $\op{Frob}_v$ acting on $\op{V}_{f_i}'$,
\[P_v(X)=\op{det}\left((1-\op{Frob}_v)|_{\op{V}_{f_i}'}\right).\] Let $\widetilde{P}_v(X)$ be the mod-$p$ reduction of $P_v(X)$ and $d_v$ the multiplicity of $v^{-1}\in \Z/p\Z$ as a root of $\widetilde{P}_v(X)$. Set $s_v$ for the largest power of $p$ such that $v^{p-1}\equiv 1\mod{ps_v}$.
\begin{Lemma}\cite[Proposition 2.4]{greenbergvatsal}
The $\mu$-invariant $\mu_v(f_i)$ is equal to $0$ and the $\lambda$-invariant $\lambda_v(f_i)$ is equal to $s_v d_v$.
\end{Lemma}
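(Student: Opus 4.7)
The plan is to follow the standard local computation of Greenberg--Vatsal; since the statement is purely local at $v\neq p$, it is insensitive to the weight or level of $f_i$ and adapts verbatim from the elliptic-curve setting. The strategy is to identify $\mathcal{H}_v(f_i/\Q_\infty)$ as an induced $\Lambda$-module from the decomposition subgroup at a single prime $\eta\mid v$, compute the local summand via inflation--restriction, and then read off the Iwasawa invariants.

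\textbf{Step 1 (decomposition of $\Q_\infty$ at $v$).} Since $v\neq p$, the cyclotomic $\Z_p$-extension $\Q_\infty/\Q$ is unramified at $v$, so for every $\eta\mid v$ we have $\op{I}_\eta=\op{I}_v$ and $\Q_{\infty,\eta}/\Q_v$ is the unique subextension of $\Q_v^{ur}/\Q_v$ cut out by the image of $\op{Frob}_v$ under the cyclotomic embedding $\Gamma\hookrightarrow 1+p\Z_p$. A short $p$-adic valuation calculation shows that $\op{Frob}_v$ maps to (the projection of) $v^{p-1}$, so the topological closure $\Gamma_v\subset\Gamma$ has index exactly $s_v$, and this is the number of primes of $\Q_\infty$ above $v$. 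Consequently $\mathcal{H}_v(f_i/\Q_\infty)\cong \op{Ind}_{\Gamma_v}^{\Gamma} J_\eta$ as $\Lambda$-modules, where $J_\eta$ is the $\eta$-component of the direct sum.

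\textbf{Step 2 (local cohomology).} Inflation--restriction applied to the tower $\Q_v\subset \Q_{\infty,\eta}\subset \Q_v^{ur}$, together with the fact that $\op{Gal}(\Q_v^{ur}/\Q_{\infty,\eta})$ has $p$-cohomological dimension $1$, identifies $J_\eta$ with $H^1(\op{I}_v,\op{W}_{f_i})^{\op{Frob}_\eta}$. Since wild inertia is pro-$v$ and acts with trivial cohomology on the $p$-primary module $\op{W}_{f_i}$, the group $H^1(\op{I}_v,\op{W}_{f_i})$ reduces to $H^1$ of the pro-$p$ tame quotient $\Z_p(1)$, which is the Tate-twisted inertia coinvariants $(\op{W}_{f_i})_{\op{I}_v}(-1)$. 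On this module $\op{Frob}_v$ acts through the matrix dual to its action on $\op{V}_{f_i}'$, so that the characteristic polynomial governing the $\op{Frob}_v$-action is $P_v(X)$ (up to the Tate twist).

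\textbf{Step 3 (Iwasawa invariants).} Since $\op{W}_{f_i}^{\op{I}_v}$ is cofinitely generated over $\Z_p$, so is $J_\eta$, and hence $\mu_v(f_i)=0$ immediately. For the $\lambda$-invariant, the $\Z_p$-corank of $J_\eta=((\op{W}_{f_i})_{\op{I}_v}(-1))^{\op{Frob}_\eta}$ equals the multiplicity of $1$ as an eigenvalue of $\op{Frob}_\eta$ on the residual module, which (after accounting for the Tate twist) equals the multiplicity $d_v$ of $v^{-1}$ as a root of $\widetilde{P}_v(X)$. Since passing from the decomposition subgroup $\Gamma_v$ to $\Gamma$ via induction multiplies $\Z_p$-coranks by $[\Gamma:\Gamma_v]=s_v$, we obtain $\lambda_v(f_i)=s_v\, d_v$.

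\textbf{Main obstacle.} The chief technical subtlety is the careful bookkeeping of the Tate twist $(-1)$ against the definition of $P_v(X)$: one needs to verify that after this twist, $\op{Frob}_\eta$ acts on $J_\eta$ with eigenvalue $1$ precisely when the corresponding eigenvalue of $\op{Frob}_v$ on $\op{V}_{f_i}'$ reduces to $v^{-1}$ modulo $\varpi$, and that an eigenvalue congruent to $1$ contributes exactly one to the $\Z_p$-corank of $J_\eta$ (rather than lifting to a characteristic-zero eigenvalue equal to $1$, which would be a different condition). Once this matching is pinned down, the remainder is a direct Euler-characteristic and induced-module computation.
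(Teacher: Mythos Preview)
The paper offers no proof of this Lemma; it is stated with a bare citation to \cite[Proposition 2.4]{greenbergvatsal}. Your sketch is precisely the Greenberg--Vatsal argument and is essentially correct, so in that sense you have supplied what the paper omits. Two imprecisions are worth flagging.

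First, in Step~1 the image of $\op{Frob}_v$ under $\chi:\Gamma\hookrightarrow 1+p\Z_p$ is $\langle v\rangle=v\,\omega(v)^{-1}$, not $v^{p-1}$. This does not affect the conclusion, since $\langle v\rangle^{p-1}=v^{p-1}$ forces $v_p(\langle v\rangle-1)=v_p(v^{p-1}-1)$, and hence $[\Gamma:\Gamma_v]=s_v$ either way.

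Second, and more substantively, in Steps~2--3 you invoke a ``$\op{Frob}_\eta$'' generating $\op{G}_\eta/\op{I}_\eta$, but this quotient is $\op{Gal}(\Q_v^{ur}/\Q_{\infty,\eta})\cong\prod_{\ell\neq p}\Z_\ell$, which is \emph{not} topologically cyclic, so no such single Frobenius exists. The argument still goes through once phrased correctly: since $H^1(\op{I}_v,\op{W}_{f_i})$ is $p$-primary and cofinitely generated, the continuous action of the pro-prime-to-$p$ group $\prod_{\ell\neq p}\Z_\ell$ factors through a finite prime-to-$p$ quotient, and on the divisible part it detects exactly the Teichm\"uller component of each eigenvalue of $\op{Frob}_v$ acting on $(\op{V}_{f_i})_{\op{I}_v}(-1)$. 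Thus an eigenvalue $v^{-1}\alpha_j$ contributes a copy of $\Q_p/\Z_p$ to $J_\eta$ if and only if $v^{-1}\alpha_j\equiv 1\pmod\varpi$, i.e.\ $\alpha_j\equiv v\pmod\varpi$, which is exactly the condition that $v^{-1}$ be a root of $\widetilde{P}_v$. This is the rigorous form of your ``main obstacle'' and confirms $\op{corank}_{\Z_p}J_\eta=d_v$.
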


\begin{Corollary}\label{cor34}
Let $f_1$ and $f_2$ be $\p$-congruent satisfying Hypothesis $\ref{mainhyp}$. Let $v\in \Sigma\backslash \{p\}$ be such that the following assertions are satisfied
\begin{enumerate}
    \item $\bar{\rho}_{f_1}(g)=\op{Id}$ (or equivalently $\bar{\rho}_{f_2}(g)=\op{Id}$) for all $g\in \op{G}_v$.
    \item The prime $v$ divides $N_2$ and not $N_1$.
\end{enumerate}
Then, we have that
\[\lambda_v(f_1)-\lambda_v(f_2)\geq 1.\]
\end{Corollary}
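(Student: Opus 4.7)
The strategy is to invoke the preceding Lemma, which expresses $\lambda_v(f_i)=s_v d_v^{(i)}$, where $d_v^{(i)}$ denotes the multiplicity of $v^{-1}$ as a root of $\widetilde{P}_v^{(i)}(X)$ attached to $f_i$; since $s_v\geq 1$ by Fermat's little theorem, it suffices to show $d_v^{(1)}-d_v^{(2)}\geq 1$.

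For $f_1$, the hypothesis $v\nmid N_1$ makes $\rho_{f_1}$ unramified at $v$, so $\op{V}_{f_1}'=\op{V}_{f_1}$ is two-dimensional; condition (1) then gives $\bar\rho_{f_1}(\op{Frob}_v)=\op{Id}$, whence $\widetilde{P}_v^{(1)}(X)=(1-X)^2\in\F[X]$. For $f_2$, condition (2) forces $\rho_{f_2}$ to be ramified at $v$, so $\op{V}_{f_2}'$ is at most one-dimensional over $K$; consequently $\widetilde{P}_v^{(2)}(X)$ has degree at most one, giving $d_v^{(2)}\leq 1$.

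The crucial point is the congruence $v\equiv 1\pmod p$. The image $\rho_{f_2}(\op{I}_v)$ is nontrivial by ramification, yet sits inside the first principal congruence subgroup $\op{ker}(\op{GL}_2(\mathcal{O})\to\op{GL}_2(\F))$, which is pro-$p$. Wild inertia is pro-$v$ and maps trivially, so the action factors through the pro-$p$ quotient of tame inertia, which is the pro-cyclic group $\Z_p(1)$ on which Frobenius acts by the $v$-th power. Setting $F=\rho_{f_2}(\op{Frob}_v)\equiv\op{Id}\pmod\varpi$ and $u=\rho_{f_2}(\tau)\neq\op{Id}$ for a topological generator $\tau$ of this pro-cyclic group, and letting $k\geq 1$ be the $\varpi$-adic valuation of $u-1$, the compatibility $FuF^{-1}=u^v$ reduces modulo $\varpi^{k+1}$ to $(v-1)(u-1)\equiv 0\pmod{\varpi^{k+1}}$, which forces $v\equiv 1\pmod p$.

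With $v\equiv 1\pmod p$ in hand, $v^{-1}\equiv 1$ is a double root of $(1-X)^2$, so $d_v^{(1)}=2$; combined with $d_v^{(2)}\leq 1$ we conclude $\lambda_v(f_1)-\lambda_v(f_2)=s_v(d_v^{(1)}-d_v^{(2)})\geq s_v\geq 1$. The main technical point is the pro-$p$ argument forcing $v\equiv 1\pmod p$; everything else is a direct reading of the Lemma.
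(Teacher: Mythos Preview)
Your argument is correct and structurally identical to the paper's: invoke the preceding Lemma, show $d_v^{(1)}=2$ and $d_v^{(2)}\le 1$, and conclude $\lambda_v(f_1)-\lambda_v(f_2)\ge s_v\ge 1$. The one substantive addition is your treatment of the congruence $v\equiv 1\pmod p$. The paper simply writes $\widetilde P_v(X)=(1-vX)^2$ for $f_1$, so that $v^{-1}$ is visibly a double root; but from the hypothesis $\bar\rho_{f_1}(\mathrm{Frob}_v)=\mathrm{Id}$ one gets $(1-X)^2$, and the two agree only once $v\equiv 1\pmod p$ is known. You supply this step with the pro-$p$ tame-inertia argument, using that $\rho_{f_2}(I_v)$ is a nontrivial subgroup of the pro-$p$ principal congruence kernel together with the relation $\sigma\tau\sigma^{-1}=\tau^{v}$. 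This is a clean, correct justification that the paper leaves implicit; in the paper's later applications the auxiliary primes are in any case chosen to split in $\Q(\mu_p)$, so the congruence is given in advance, whereas your version makes the corollary self-contained as stated.
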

\begin{proof}
Since $v$ divides $N_2$, it follows that $\dim \op{V}_{f_2}|_{\op{I}_v}\leq 1$ and hence, $\lambda_v(f_2)\leq s_v$. On the other hand, $\op{V}_{f_1}'=\op{V}_{f_1}$ and $\widetilde{P}_v(X)=(1-vX)^2$. As a result, $\lambda_v(f_1)= 2s_v$. It follows that 
\[\lambda_v(f_1)-\lambda_v(f_2)\geq s_v\geq 1.\]
\end{proof}
By the structure theory of $\Lambda$-modules,
\[\lambda_v(f_i)=\op{rank}_{\Z_p} \mathcal{H}_v(f_i/\Q_{\infty})^{\vee}.\]It follows from \eqref{ses} that
\begin{equation}\label{lambdaequation1}\lambda^{\ddag}(f_i)=\lambda^{\ddag, \Sigma}(f_i)-\sum_{v\in \Sigma\backslash \{p\}} \lambda_v(f_i).\end{equation}
The following is a special case of a result due to Hatley and Lei, see \cite[section]{hatleylei} that applies to our setting and generalizes results of Kim \cite{Kim} and Greenberg-Vatsal \cite{greenbergvatsal}. Note that the (BLZ) condition in \cite[section 4.2, p. 1278]{hatleylei} is automatically satisfied since it is assumed that $p\geq k$.
\begin{Th}\label{th35}
Let $f_1$ and $f_2$ be $\mathfrak{p}$-congruent modular forms satisfying Hypothesis $\ref{mainhyp}$ and $\Sigma$ the set of primes dividing $pN_1N_2$. Let $\ddag$ be a choice of sign $+$ or $-$. Then the following assertions hold:
\begin{itemize}
    \item $\mu^{\ddag}(f_1)=0$ if and only if $\mu^{\ddag}(f_2)=0$.
    \item If $\mu^{\ddag}(f_1)=0$ (or equivalently, $\mu^{\ddag}(f_2)=0$) then, 
    \[\lambda^{\ddag, \Sigma}(f_1)=\lambda^{\ddag, \Sigma}(f_2).\]
\end{itemize}
\end{Th}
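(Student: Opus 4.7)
The plan is to execute the Greenberg--Vatsal strategy, as extended to the signed setting by Kim and Hatley--Lei, which reduces both assertions to a comparison of \emph{residual imprimitive Selmer groups} depending only on the common mod-$\varpi$ representation $\bar{\rho}:=\bar{\rho}_{f_1}\cong\bar{\rho}_{f_2}$. First, I would pass from the primitive to the imprimitive invariants. By the Lemma preceding Corollary \ref{cor34}, for every $v\in\Sigma\setminus\{p\}$ the local term $\mathcal{H}_v(f_i/\Q_\infty)^\vee$ has vanishing $\mu$-invariant, so the short exact sequence \eqref{ses} yields $\mu^{\ddag}(f_i)=\mu^{\ddag,\Sigma}(f_i)$ and \eqref{lambdaequation1} records the analogous $\lambda$-invariant defect. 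Thus it suffices to establish $\mu^{\ddag,\Sigma}(f_1)=0\Leftrightarrow\mu^{\ddag,\Sigma}(f_2)=0$ and, when both vanish, $\lambda^{\ddag,\Sigma}(f_1)=\lambda^{\ddag,\Sigma}(f_2)$.

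Next, I would control both invariants via mod-$\varpi$ cohomology. The key structural input is that $\op{Sel}_{p^\infty}^{\ddag,\Sigma}(f_i/\Q_\infty)^\vee$ has no non-zero finite $\Lambda$-submodule: this is Greenberg's result in the ordinary case, and is established by Kim and Hatley--Lei in the signed setting under the (BLZ) hypothesis, which is automatic here because $(\star)$ forces $p\geq k$. Together with the structure theorem this gives
\[
\mu^{\ddag,\Sigma}(f_i)=0 \ \Longleftrightarrow\ \op{Sel}_{p^\infty}^{\ddag,\Sigma}(f_i/\Q_\infty)[\varpi]\ \text{is finite},
\]
and when these vanish $\lambda^{\ddag,\Sigma}(f_i)=\dim_{\F}\op{Sel}_{p^\infty}^{\ddag,\Sigma}(f_i/\Q_\infty)[\varpi]$. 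Taking the long exact sequence in $\op{Gal}(\Q_\Sigma/\Q_\infty)$-cohomology attached to $0\to \op{W}_{f_i}[\varpi]\to \op{W}_{f_i}\xrightarrow{\varpi}\op{W}_{f_i}\to 0$, together with the vanishing $H^0(\Q_\Sigma/\Q_\infty,\op{W}_{f_i})=0$ coming from the irreducibility of $\bar{\rho}$, identifies this $\varpi$-torsion with a residual imprimitive Selmer group defined purely in terms of $\bar{\rho}$ and $\Sigma$. Since $\bar{\rho}_{f_1}\cong\bar{\rho}_{f_2}$ this residual object is common to both $f_1$ and $f_2$, yielding both the $\mu$-equivalence and the $\lambda$-equality.

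The main obstacle is verifying that the local condition at $p$ descends compatibly modulo $\varpi$ to a condition depending only on $\bar{\rho}|_{G_p}$. In the ordinary case this is elementary: by Lemma \ref{congruencelemma} both $f_i$ are simultaneously ordinary, so each $\rho_{f_i}|_{G_p}$ is upper-triangular and the Bloch--Kato condition is the preimage of the unramified cohomology of the rank-one quotient, which descends transparently. The delicate regime is the non-ordinary crystalline case, where one must show that Lei's signed local conditions $H^1_f(L_\eta,\op{W}_{f_i})^{\pm}$ reduce modulo $\varpi$ to a residual signed condition independent of $i$; this is precisely where the (BLZ) hypothesis of \cite[section 4.2]{hatleylei} intervenes. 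Granting this descent, the theorem follows uniformly in the sign $\ddag$ and in the ordinary and non-ordinary regimes.
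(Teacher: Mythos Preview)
The paper does not actually prove Theorem~\ref{th35}; it is stated as a quotation of results from the literature, with the sentence preceding the theorem attributing it to Hatley--Lei \cite{hatleylei} (generalizing Kim \cite{Kim} and Greenberg--Vatsal \cite{greenbergvatsal}) and noting only that the (BLZ) condition holds automatically since $p\geq k$. No argument is given in the paper itself.

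Your proposal, by contrast, sketches the substance of the Greenberg--Vatsal/Hatley--Lei argument that the paper is citing. The outline is faithful to those sources: reduce to imprimitive invariants via \eqref{ses} and the vanishing of $\mu_v(f_i)$; use the irreducibility of $\bar{\rho}$ to identify $\op{Sel}_{p^\infty}^{\ddag,\Sigma}(f_i/\Q_\infty)[\varpi]$ with a residual imprimitive Selmer group depending only on $\bar{\rho}$ and $\Sigma$; and invoke the structure theorem together with the absence of nonzero finite $\Lambda$-submodules to read off $\mu$ and $\lambda$. You also correctly isolate the only genuinely delicate step, namely the mod-$\varpi$ descent of the signed local condition at $p$ in the non-ordinary case, which is precisely where the (BLZ) hypothesis of \cite[\S4.2]{hatleylei} enters. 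So your write-up supplies what the paper deliberately omits; there is no discrepancy in method, only in level of detail.
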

\begin{Corollary}\label{cor36}
Let $f_1$ and $f_2$ be $\mathfrak{p}$-congruent modular forms satisfying Hypothesis $\ref{mainhyp}$. Let $\ddag\in \{+,-\}$ and assume that $\mu^{\ddag}(f_1)=0$ (or equivalently, $\mu^{\ddag}(f_2)=0$). Suppose that there is a set of $n$ primes $v_1, \dots, v_n$ not equal to $p$ such that
\begin{enumerate}
    \item $\bar{\rho}_{f_1}(g)=\op{Id}$ (or equivalently $\bar{\rho}_{f_2}(g)=\op{Id}$) for all $g\in \op{G}_v$.
    \item For $i=1,\dots, n$, the prime $v_i$ divides $N_2$ but not $N_1$.
\end{enumerate}
Then, we have that
\[\lambda^{\ddag}(f_2)-\lambda^{\ddag}(f_1)\geq n.\]
\end{Corollary}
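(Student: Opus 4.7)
The strategy is to compare the two primitive $\lambda$-invariants by passing through the imprimitive ones, so that the difference becomes a pure sum of purely local contributions $\lambda_v$, to which Corollary~\ref{cor34} can be applied prime by prime.

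First I would invoke Theorem~\ref{th35}. Hypothesis~\ref{mainhyp} holds and $\mu^{\ddag}(f_1)=0$, so the theorem gives $\mu^{\ddag}(f_2)=0$ together with the identity
\[
\lambda^{\ddag,\Sigma}(f_1)=\lambda^{\ddag,\Sigma}(f_2).
\]
Writing equation~\eqref{lambdaequation1} for each of $f_1$ and $f_2$ and subtracting, the imprimitive terms cancel and one is left with
\[
\lambda^{\ddag}(f_2)-\lambda^{\ddag}(f_1)=\sum_{v\in\Sigma\setminus\{p\}}\bigl(\lambda_v(f_1)-\lambda_v(f_2)\bigr).
\]
For each of the $n$ distinguished primes $v_i$, conditions~(1) and~(2) of the corollary are exactly the two hypotheses of Corollary~\ref{cor34}, so the corresponding summand is at least $1$; summing over $i=1,\dots,n$ already contributes $n$ to the right-hand side.

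The main obstacle, and the point that requires the most care, is to show that the contribution from the remaining primes $v\in\Sigma\setminus(\{p\}\cup\{v_1,\dots,v_n\})$ is non-negative. At any such $v$ that happens to be unramified for both $f_1$ and $f_2$, the residual isomorphism $\bar{\rho}_{f_1}\simeq\bar{\rho}_{f_2}$ forces the mod-$p$ polynomials $\widetilde{P}_v(X)$ for the two forms to coincide, so $d_v$ and hence $\lambda_v$ agree and the term vanishes. For primes dividing $N_1N_2$ outside $\{v_1,\dots,v_n\}$, I would run a short local analysis of the maximal inertia quotient $\op{V}'_{f_i}$, again exploiting the congruence modulo $\mathfrak{p}$, to compare the two $\widetilde{P}_v$'s. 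In the setting in which Corollary~\ref{cor36} is actually applied in the sequel, the level of $f_2$ is arranged so that it differs from the level of $f_1$ only at the $v_i$, whence every term outside $\{v_1,\dots,v_n\}$ vanishes and the lower bound $\lambda^{\ddag}(f_2)-\lambda^{\ddag}(f_1)\geq n$ drops out of the displayed identity.
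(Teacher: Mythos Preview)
Your core argument---combining Theorem~\ref{th35} with \eqref{lambdaequation1} to obtain
\[
\lambda^{\ddag}(f_2)-\lambda^{\ddag}(f_1)=\sum_{v\in\Sigma\setminus\{p\}}\bigl(\lambda_v(f_1)-\lambda_v(f_2)\bigr)
\]
and then applying Corollary~\ref{cor34} at each $v_i$---is exactly the paper's proof. The paper is in fact terser: it records this identity and writes ``the result follows from Corollary~\ref{cor34}'', with no discussion of the remaining primes at all.

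You are right to flag the non-distinguished summands as a point needing care; the paper's proof is simply silent on them. Your observation that at primes unramified for both forms the congruence forces the two $\widetilde P_v$'s, and hence the two $\lambda_v$'s, to coincide is correct and disposes of those terms. But your final move---declaring that in the intended application $N_2$ differs from $N_1$ only at the $v_i$---is not a proof of the corollary as stated, and is in any case not quite accurate: the lift produced via Theorem~\ref{techtheorem} is unramified only outside an auxiliary set $\tilde S\supseteq S$, so $N_2$ may well acquire prime divisors beyond the $v_i$. In short, on the issue you single out as ``the main obstacle'' both your write-up and the paper's leave the same loose end; the only difference is that you notice it.
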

\begin{proof}
From Theorem $\ref{th35}$ and \eqref{lambdaequation1}, it follows that 
\[\lambda^{\ddag}(f_2)-\lambda^{\ddag}(f_1)=\sum_{v\in \Sigma\backslash \{p\}} \left(\lambda_v(f_1)-\lambda_v(f_2)\right).\]
The result follows from Corollary $\ref{cor34}$.
\end{proof}
\section{Lifting Galois Representations}\label{s4}
\par In this section, we discuss known results on lifting global Galois representations which will play a crucial role in proving our results on $\lambda$-invariants. The technique was pioneered by Ramakrishna in \cite{ravi1, ravi2} and further improved by Fakhruddin, Khare and Patrikis in \cite{FKP}. Let $p\geq 5$ be a prime number and $\F$ a finite field of characteristic $p$. Recall that $\chi:\op{G}_{\Q, \{p\}}\rightarrow \op{GL}_1(\Z_p)$ is the cyclotomic character. Depending on the context, we shall sometimes also denote the restriction of $\chi_{|p}$ by $\chi$ itself. Let $c$ denote complex conjugation and $R$ a $\Z_p$-algebra. A character $\alpha:\op{Gal}(\bar{\Q}/\Q)\rightarrow \op{GL}_1(R)$ is \textit{odd} if $\alpha(c)=-1$. We recall the notion of a \textit{geometric} Galois representation, due to Fontaine and Mazur \cite{fontainemazur}. 
\begin{Definition}\label{geometricdef}
Let $\rho:\op{Gal}(\bar{\Q}/\Q)\rightarrow \op{GL}_2(\bar{\Q}_p)$ be a continuous Galois representation. Then, $\rho$ is said to be \textit{geometric} if it satisfies the following conditions:
\begin{enumerate}
    \item $\rho$ is irreducible.
    \item $\rho$ is unramified away from finitely many primes.
    \item The determinant character $\det \rho$ is odd.
    \item The local representation $\rho_{\restriction p}:\op{G}_p\rightarrow \op{GL}_2(\bar{\Q}_p)$ is \textit{deRham} in the sense of \cite[p.73]{brinonconrad}. Note that this condition is equivalent to requiring that $\rho_{|p}$ be \emph{potentially semistable} (cf. page 78, line 31, and Theorem 9.3.5 of \emph{loc. cit.}).
\end{enumerate}
\end{Definition} If $\mathcal{O}$ is the valuation ring of a finite extension $K$ of $\Q_p$, a continuous integral representation $\rho:\op{Gal}(\bar{\Q}/\Q)\rightarrow \op{GL}_2(\mathcal{O})$ is said to be geometric if it satisfies the conditions above, when viewed as a representation to $\op{GL}_2(K)$. The Fontaine-Mazur conjecture states that a geometric Galois representation arises from a Hecke eigencuspform. More generally, an $n$-dimensional geometric Galois representation is expected to arise from the \'etale cohomology of a proper variety, hence the terminology. \par Denote by $\op{W}(\F)$ the ring of Witt vectors with residue field $\F$. This is the valuation ring of the unique unramified extension of $\Q_p$ with residue field $\F$. Let $\bar{\rho}:\op{Gal}(\bar{\Q}/\Q)\rightarrow \op{GL}_2(\F)$ be a Galois representation. Serre conjectured that if $\bar{\rho}$ is irreducible and $\op{det}\bar{\rho}$ is an odd character, then it necessarily lifts to a characteristic zero Galois representation arising from a Hecke eigencuspform. Ramakrishna proved in \cite{ravi2} that if $\bar{\rho}$ satisfies some additional conditions, then it lifts to a characteristic zero geometric Galois representation $\rho$ as depicted
\[ \begin{tikzpicture}[node distance = 2.5 cm, auto]
            \node at (0,0) (G) {$\op{Gal}(\bar{\Q}/\Q)$};
             \node (A) at (3,0){$\op{GL}_2(\F)$.};
             \node (B) at (3,2){$\op{GL}_2(\op{W}(\F))$};
      \draw[->] (G) to node [swap]{$\bar{\rho}$} (A);
       \draw[->] (B) to node{} (A);
      \draw[->] (G) to node {$\rho$} (B);
\end{tikzpicture}\]Subsequently, Khare and Wintenberger in \cite{kharewintenberger} proved Serre's conjecture and all but a few cases of the Fontaine-Mazur conjecture would be resolved, by the work of Taylor \cite{taylor}, Kisin \cite{kisin} and others.

\par We introduce some notions from the deformation theory of Galois representations. Let $p\geq 5$ be a prime and $S$ be a finite set of primes containing $p$. Fix a mod-$p$ Galois representation
\[\bar{\rho}:\op{G}_{\Q,S}\rightarrow\op{GL}_2(\F)\] and at each prime $v\in S$, denote by $\bar{\rho}_{|v}$ the restriction of $\bar{\rho}$ to $\op{G}_v$.\begin{Definition} Let $\mathcal{C}_{\op{W}(\F)}$ be the category of coefficient rings over $\text{W}(\F)$ with residue field $\F$. The objects of this category consist of local $\text{W}(\F)$-algebras $(R,\mathfrak{m})$ for which
      \begin{itemize}
          \item $R$ is complete and noetherian,
          \item $R/\mathfrak{m}$ is isomorphic to $\F$ as a $\text{W}(\F)$-algebra.
      \end{itemize}A morphism in this category is a map of local rings which is also a $\text{W}(\F)$-algebra homorphism. \end{Definition}
      A coefficient ring $R$ is equipped with a residue map $R\rightarrow \F$, upon going modulo the maximal ideal.  For $(R, \mathfrak{m})\in \mathcal{C}_{\op{W}(\F)}$, let $\widehat{\op{GL}_2}(R)\subset \op{GL}_2(R)$ be the subgroup of matrices which reduce to the identity modulo the maximal ideal.
      \begin{Definition} Let $R$ denote a coefficient ring with residue field $\F$ and $\Pi$ denote $\op{G}_{\Q}$ (resp. $\op{G}_{v}$). Two lifts $\rho,\rho':\Pi\rightarrow \op{GL}_2(R)$ of $\bar{\rho}$ (resp. $\bar{\rho}_{\restriction v}$) the residual representation are \textit{strictly-equivalent} if $\rho=A\rho' A^{-1}$ for $A\in \widehat{\op{GL}}_2(R)$. An \textit{$R$-deformation} is a strict equivalence class of lifts.
      \end{Definition}
    Note that if $\rho$ and $\rho'$ are strictly equivalent, then $\op{det} \rho=\op{det}\rho'$. If $\rho$ is a deformation, the determinant $\op{det}\rho$ is thus a well defined character independent of the choice of representative. Let $\psi:\op{G}_{\Q, S}\rightarrow \op{GL}_1(\op{W}(\F))$ be a lift of $\det\bar{\rho}$. For any $\op{W}(\F)$-algebra $R$, we may view $\psi$ as a homomorphism into $\op{GL}_1(R)$. At each prime $v$, set $\operatorname{Def}_v^{\psi}(R)$ to be the set of $R$-deformations of $\bar{\rho}_{\restriction v}$ with determinant equal to $\psi_{|v}:\op{G}_{v}\rightarrow \op{GL}_1(R)$. The association $R\mapsto \operatorname{Def}_v^{\psi}(R)$ defines a functor $\operatorname{Def}_v^{\psi}:\mathcal{C}_{\op{W}(\F)}\rightarrow \operatorname{Sets}$. A deformation functor at $v$ with determinant $\psi_{|v}$ is a subfunctor $\mathcal{C}_v$ of $\op{Def}_v^{\psi}$. For each coefficient ring $R$, the set $\mathcal{C}_v(R)$ is a subset of $\op{Def}_v^{\psi}(R)$.
    \begin{Definition}\label{adgaloisaction}Set $\op{Ad}\bar{\rho}$ to denote the Galois module whose underlying vector space consists of $2\times 2$ matrices with entries in $\F$. The Galois action is as follows: for $g\in \op{G}_{\Q}$ and $v\in \op{Ad}\bar{\rho}$, 
     set $g\cdot v:=\bar{\rho}(g) v \bar{\rho}(g)^{-1}$. Let $\g$ be the $\op{G}_{\Q}$-stable submodule of trace zero matrices and $\g^*:=\op{Hom}_{\F}(\g, \F(\bar{\chi}))$.
     \end{Definition}

Let $\widetilde{\eta}:\op{G}_v\rightarrow \op{Ad}^0\bar{\rho}$ be a $1$-cocyle and let $\eta\in H^1(\op{G}_v, \op{Ad}^0\bar{\rho})$ be the associated cohomology class. Set $D:=\F[\epsilon]=\F[x]/(x^2)$ be the ring of dual numbers, with $\epsilon:=x(\mod{x^2})$. Let $\op{Id}$ denote the $2\times 2$ identity matrix. Then, $\bar{\rho}_{|v}(\op{Id}+\epsilon \widetilde{\eta}(\cdot))$ defines a lift of $\bar{\rho}_{|v}$. The deformation class of this lift is denoted by $\bar{\rho}_{|v}(\op{Id}+\epsilon \eta(\cdot))$, and depends only on the cohomology class $\eta$. It is easy to see that this gives an identification of $\operatorname{Def}_v^{\psi}(D)$ with $H^1(\op{G}_v, \g)$ (for any choice of character $\psi:\op{G}_v\rightarrow \op{GL}_1(\op{W}(\F))$). Given a liftable deformation function $\mathcal{C}_v$, the set of infinitesimal deformations $\mathcal{C}_v(D)$ is an $\F_p$-subspace of $H^1(\op{G}_v, \g)$.
     
    \begin{Definition}
    Let $R\in \mathcal{C}_{\op{W}(\F)}$ with maximal ideal $\mathfrak{m}$, and $I$ an ideal in $R$. The mod-$I$ reduction map $R\rightarrow R/I$ is said to be a \textit{small extension} if $I.\mathfrak{m}=0$. A functor of deformations $\mathcal{C}_v:\mathcal{C}_{\op{W}(\F)}\rightarrow \op{Sets}$ of $\bar{\rho}_{\restriction v}$ is \textit{liftable} if for every small extension $R\rightarrow R/I$ the induced map $\mathcal{C}_v(R)\rightarrow \mathcal{C}_v(R/I)$ is surjective.
    \end{Definition}

    \begin{Proposition}\label{prop46}
    Let $p\geq 5$ be a prime number, and $\bar{\rho}:\op{Gal}(\bar{\Q}/\Q)\rightarrow \op{GL}_2(\F)$ be a Galois representation. Assume that the determinant character $\det \bar{\rho}$ is odd and that $\bar{\rho}_{|p}$ is not twist equivalent to the trivial representation or an unramified indecomposable representation of the form $\mtx{1}{\ast}{0}{1}$. At a prime $v\in S$, set \[\delta_{v}:=\begin{cases}
     2\text{ if }v=p,\\
    0 \text{ otherwise}.\end{cases}\] Then, at each prime $v\in S$ there is a liftable deformation functor $\mathcal{C}_v=\mathcal{C}_v^\psi\subseteq \op{Def}_v^{\psi}$ such that $\mathcal{C}_v(D)$ is an $\F$-subspace of $H^1(\op{G}_v, \g)$ of codimension $H^2(\op{G}_v, \g)+\delta_v$. Furthermore, at $p$, the functor $\mathcal{C}_p\subseteq \op{Def}_p^{\psi}$ satisfies the following properties.
    \begin{enumerate}
        \item Suppose that $\bar{\rho}_{|p}$ is reducible of the form $\mtx{\varphi_1}{\ast}{0}{\varphi_2}$, where $\varphi_2$ is an unramified character. Then $\mathcal{C}_p$ consists of ordinary deformations $\mtx{\chi^a \gamma_1}{\ast}{0}{\gamma_2}$. Here, $\gamma_1$ and $\gamma_2$ are characters of finite order and $\gamma_2$ is unramified. In other words, in this case, the deformations are ordinary.
        \item If $\bar{\rho}_{|I_p}$ acts by fundamental characters of level $2$, then the deformations in $\mathcal{C}_p$ are crystalline.
    \end{enumerate}
    \end{Proposition}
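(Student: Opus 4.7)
The plan is to construct the local deformation functors $\mathcal{C}_v$ prime-by-prime at each $v \in S$, following the framework developed by Ramakrishna \cite{ravi2} and refined by Fakhruddin-Khare-Patrikis \cite{FKP}. In each case, the codimension assertion will be checked by combining the local Euler characteristic formula with local Tate duality.

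For $v \in S \setminus \{p\}$, I would take $\mathcal{C}_v$ to be a minimally ramified (Ramakrishna-type) deformation functor with fixed determinant $\psi_{|v}$. Liftability is proved by an inductive argument on small extensions, showing that the obstruction classes vanish in a controlled way in $H^2$. The local Euler characteristic formula at $v \neq p$ gives $\dim H^1(\op{G}_v, \g) = \dim H^0(\op{G}_v, \g) + \dim H^2(\op{G}_v, \g)$, and the minimally ramified tangent space has dimension $\dim H^0(\op{G}_v, \g)$, so $\mathcal{C}_v(D)$ has codimension $\dim H^2(\op{G}_v, \g) = \dim H^2(\op{G}_v, \g) + \delta_v$ inside $H^1(\op{G}_v, \g)$, as required.

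At $v = p$ the construction bifurcates according to the structure of $\bar{\rho}_{|p}$. In the reducible case with unramified quotient $\varphi_2$, I would take $\mathcal{C}_p$ to consist of ordinary deformations with upper-triangular shape and diagonal of the prescribed form, namely $\chi^a \gamma_1$ and $\gamma_2$ with $\gamma_1,\gamma_2$ of finite order and $\gamma_2$ unramified; liftability and smoothness of this functor are classical (cf.\ \cite{ravi2}), and the excluded twist-trivial and unramified indecomposable $\mtx{1}{\ast}{0}{1}$ cases are precisely those in which the ordinary deformation functor fails to have the expected tangent space dimension. In the case where $\bar{\rho}_{|I_p}$ acts by fundamental characters of level two, $\bar{\rho}_{|p}$ is absolutely irreducible and I would take $\mathcal{C}_p$ to be the Fontaine-Laffaille functor of crystalline deformations with small Hodge-Tate weights. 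In either sub-case the tangent space has dimension $1 + \dim H^0(\op{G}_p, \g)$, and combining this with the Euler characteristic formula at $p$ (namely $\dim H^1 - \dim H^0 - \dim H^2 = \dim \g = 3$) yields codimension $\dim H^2(\op{G}_p, \g) + 2 = \dim H^2(\op{G}_p, \g) + \delta_p$.

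The main obstacle is the crystalline sub-case at $p$: establishing liftability and the claimed tangent space dimension for the Fontaine-Laffaille deformation functor requires the classification of crystalline representations in low Hodge-Tate weight, and needs $p \geq 5$ in order to stay inside the Fontaine-Laffaille range. Once these ingredients are in hand, the codimension formulae at $p$ follow mechanically, and the remaining verifications are a prime-by-prime invocation of the local results already established in \cite{FKP}.
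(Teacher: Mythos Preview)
Your outline is essentially the argument behind the paper's proof, which is in fact a bare citation: the paper simply refers to \cite[Proposition~1]{ravi2} for $v\neq p$ and to the case-by-case tables on pp.~124--138 of \cite{ravi2} for $v=p$, without reproducing the Euler-characteristic bookkeeping you sketch. Your computations of the codimensions via the local Euler characteristic formula are correct and match what underlies Ramakrishna's result.

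There is, however, one genuine incompleteness in your treatment at $v=p$. Your bifurcation into ``reducible with unramified quotient $\varphi_2$'' and ``$\bar{\rho}_{|I_p}$ via level-$2$ fundamental characters'' does not exhaust the local types allowed by the proposition's hypotheses. A reducible $\bar{\rho}_{|p}$ with both diagonal characters ramified on inertia (and with $\varphi_1\neq\varphi_2$, so that neither excluded twist-class applies) is permitted by the statement but falls outside both of your sub-cases; Ramakrishna's tables treat such configurations separately, essentially by twisting into an ordinary problem or by a direct construction. This gap does not affect the conditional assertions (1) and (2) of the ``Furthermore'' clause, but it does leave the general existence of $\mathcal{C}_p$ with the stated codimension unproved in those remaining cases. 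In the paper's eventual applications (where $\bar{\rho}$ comes from a form satisfying $(\star)$, forcing either an unramified quotient or level-$2$ action) your two cases suffice, but the proposition as stated is more general and the paper relies on Ramakrishna's full case analysis to cover it.

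A minor point: invoking \cite{FKP} here is unnecessary. The paper reserves that reference for the global lifting step in Theorem~\ref{techtheorem}; the local content of the present proposition is entirely due to \cite{ravi2}.
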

    \begin{proof}
    When $v\neq p$, this result follows from \cite[Proposition 1]{ravi2}. When $v=p$, we refer to the tables on p.125 and p.128, and the case by case analysis in pp.124-138 in \textit{loc. cit.} We also refer to the statement of the second assertion of Theorem 1 in \cite{ravi2}. 
    \end{proof}
    Note that for any $m>0$, the quotient map $\op{W}(\F)/p^{m+1}\rightarrow \op{W}(\F)/p^m$ is a small extension. It follows from the above result that $\bar{\rho}_{|v}$ may be lifted to a continuous characteristic zero representation 
    \[r_v: \op{G}_v\rightarrow \op{GL}_2(\op{W}(\F))\] satisfying $\mathcal{C}_v$ with determinant $\psi_{|v}$, by lifting one step at a time, from $\op{W}(\F)/p^m$ to $\op{W}(\F)/p^{m+1}$.
Consider the surjection $\op{GL}_2(\F)\rightarrow \op{PGL}_2(\F)$ obtained by going modulo the subgroup of scalar matrices. The projective image of $\bar{\rho}$ refers to the image of the composed representation to $\op{PGL}_2(\F)$. 
\par The result below follows from \cite[Theorem A]{FKP}, when specialized to the group $\op{G}=\op{GL}_2$ and the number field $F=\Q$. This result in indeed very general and applies to a general algebraic group, hence the assumptions are rather technical. The author refers to this particular result since it is the only one in the literature that he is aware of that makes an additional assertion about fixing the congruence class of local representations $r_v$ at a set of primes $\Omega$.
\begin{Th}\label{techtheorem}
Let $p\geq 5$ be a prime number and $\F$ a finite field of characteristic $p$. Let $S$ be a finite set of primes containing $p$ and \[\bar{\rho}:\op{G}_{\Q,S}\rightarrow \op{GL}_2(\F)\] a Galois representation with odd determinant. Let $\psi:\op{G}_{\Q,S}\rightarrow \op{GL}_1(\op{W}(\F))$ be an odd continuous lift of $\det \bar{\rho}$ and $\Omega$ be a subset of $S\backslash \{p\}$. Assume that the following conditions are satisfied:

\begin{enumerate}
    \item the projective image of $\bar{\rho}$ contains $\op{PSL}_2(\F_p)$.
    \item For each prime $v\in \Omega$, $\bar{\rho}_{|v}$ is prescribed a lift 
    \[r_v:\op{G}_v\rightarrow \op{GL}_2(\op{W}(\F))\] with determinant $\psi_{|v}$.
    \item If $\bar{\rho}_{|p}$ is reducible, then $\bar{\rho}_{|p}$ is not twist equivalent to the trivial representation or an unramified indecomposable representation of the form $\mtx{1}{\ast}{0}{1}$.
\end{enumerate}Let $N\geq 1$ be any positive integer. Then, there is a finite set of primes $\tilde{S}\supseteq S$ and a continuous lift
\[\rho:\op{G}_{\Q, \tilde{S}}\rightarrow \op{GL}_2(\mathcal{O})\] of $\bar{\rho}$ such that the following conditions are satisfied.
\begin{enumerate}
    \item For each prime $v\in \Omega$, 
    \[\rho_{|v}\equiv r_v \mod{\varpi^N},\]
    \item $\rho$ is geometric in the sense of Definition $\ref{geometricdef}$,
    \item $\rho_{|p}$ satisfies $\mathcal{C}_p$ (see Proposition $\ref{prop46}$),
    \item $\op{det}\rho=\psi$.
\end{enumerate}Here, $\mathcal{O}$ is the valuation ring of a finite extension of $\op{W}(\F)[p^{-1}]$ and $\varpi$ its uniformizer. The choice of $\mathcal{O}$ depends on $\{r_v\}_{v\in \Omega}$ and not on $N$ (but the lift $\rho$ does depend on $N$).
\end{Th}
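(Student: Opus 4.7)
The plan is to deduce this by specializing \cite[Theorem A]{FKP} to the reductive group $G=\op{GL}_2$ over the base field $F=\Q$. The FKP theorem takes as input a residual representation with sufficiently large image, a fixed global determinant, and a collection of liftable local deformation conditions satisfying a tangent-space codimension bound, and produces a geometric characteristic-zero lift satisfying the prescribed local conditions after enlarging the ramification set by auxiliary (Ramakrishna-type) primes.

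First I would specify, for each prime $v$ to be ramified at, a liftable deformation functor $\mathcal{C}_v^\psi$ with determinant $\psi_{|v}$: at $v=p$ take $\mathcal{C}_p$ from Proposition \ref{prop46}, which is available precisely because the hypothesis excludes the twist-equivalent cases in which no suitable $\mathcal{C}_p$ is known; at a prime $v\in\Omega$, take the functor whose $R$-points consist of those deformations of $\bar{\rho}_{|v}$ that reduce modulo $\varpi^N$ to $r_v$ (this ``matching mod $\varpi^N$'' condition is exactly the kind of prescribed-lift condition that \cite[Theorem A]{FKP} is designed to accommodate, and it is liftable with the tangent-space dimension read off from Proposition \ref{prop46} applied to $r_v$); and at the remaining primes $v\in S\setminus(\Omega\cup\{p\})$, take $\mathcal{C}_v$ from Proposition \ref{prop46}.

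Next I would verify the combinatorial input to FKP: the character $\psi$ is odd by hypothesis; at each $v$ the codimension of $\mathcal{C}_v(D)$ inside $H^1(\op{G}_v,\g)$ is $\dim_{\F}H^2(\op{G}_v,\g)+\delta_v$, which is what Proposition \ref{prop46} records; the residual representation is big in the sense required by FKP because its projective image contains $\op{PSL}_2(\F_p)$ and $p\geq 5$, and this forces the vanishing statements on $H^0$ and $H^1$ of $\op{Gal}(\Q(\bar{\rho},\mu_p)/\Q)$ with adjoint coefficients that are used to find Ramakrishna primes annihilating the dual Selmer obstruction. Invoking \cite[Theorem A]{FKP} then yields the finite set $\tilde{S}\supseteq S$, the coefficient ring $\mathcal{O}$ (which depends only on the $r_v$'s and grows only to ensure the prescribed lifts land in a common $\mathcal{O}$), and the lift $\rho:\op{G}_{\Q,\tilde{S}}\to\op{GL}_2(\mathcal{O})$ satisfying $\mathcal{C}_v$ at every $v$, hence in particular $\rho_{|v}\equiv r_v\pmod{\varpi^N}$ for $v\in\Omega$ and $\mathcal{C}_p$ at $p$, with $\det\rho=\psi$.

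Finally I would check that the resulting $\rho$ is geometric in the sense of Definition \ref{geometricdef}: irreducibility is forced by the bigness of $\bar\rho$; the determinant $\det\rho=\psi$ is odd by hypothesis; ramification is confined to the finite set $\tilde{S}$; and $\mathcal{C}_p$ produces a local representation at $p$ that is either ordinary or crystalline, so in particular de Rham. The main obstacle is purely bookkeeping about the local condition at $\Omega$: one must see that the ``congruent to $r_v$ modulo $\varpi^N$'' condition really is a liftable deformation condition with the correct tangent-space codimension, so that the global Poitou-Tate/Wiles Euler characteristic calculation used in the FKP auxiliary-prime construction remains balanced; this is exactly the refinement that \cite[Theorem A]{FKP} provides over Ramakrishna's original lifting theorem, and once it is cited the rest of the argument is a formal verification of hypotheses.
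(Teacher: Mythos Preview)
Your proposal is essentially correct and follows the same route as the paper: specialize \cite[Theorem A]{FKP} to $G=\op{GL}_2$ over $\Q$, supply the liftable local conditions $\mathcal{C}_v$ from Proposition~\ref{prop46}, and verify the large-image hypotheses.

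One point of packaging deserves correction. You describe the local condition at $v\in\Omega$ as the functor ``deformations congruent to $r_v$ modulo $\varpi^N$'' and assert this is liftable with a tangent space computed from Proposition~\ref{prop46}. That is not well-posed: reduction modulo $\varpi^N$ has no meaning for a general coefficient ring $R\in\mathcal{C}_{\op{W}(\F)}$, and in particular over the dual numbers $D=\F[\epsilon]$ there is nothing to reduce. The paper handles this differently (and correctly): at each $v\in S$ one takes the liftable functor $\mathcal{C}_v$ from Proposition~\ref{prop46}, notes that the prescribed $r_v$ lies in $\mathcal{C}_v(\op{W}(\F))$, and then cites \cite[Theorem A]{FKP} directly---the mod-$\varpi^N$ congruence to $r_v$ is an \emph{output} of the FKP construction, not an input encoded as a deformation functor. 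Since you ultimately cite FKP for exactly this refinement, the argument goes through; but the sentence claiming the congruence condition ``is a liftable deformation condition with the correct tangent-space codimension'' should be dropped or rephrased.

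The paper is also more explicit than your sketch about why FKP's standing hypotheses hold for $p\geq 5$: the assumption that the projective image contains $\op{PSL}_2(\F_p)$ forces $\g$ and $\g^*$ to be irreducible and non-isomorphic (so the disjointness and $[\tilde{\Q}(\mu_p):\tilde{\Q}]\geq a_G$ conditions are automatic), and non-solvability of $\op{PSL}_2(\F_p)$ gives absolute irreducibility of $\bar{\rho}_{|\op{G}_{\Q(\mu_p)}}$. Your phrase ``bigness forces the vanishing statements'' is pointing at this, but the paper spells out which specific hypotheses of FKP are being discharged.
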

\begin{proof}
As mentioned earlier, the result follows from \cite[Theorem A]{FKP}, though it takes some explanation to show how various technical hypotheses specialize to this simpler setting. We refer to notation from \textit{loc. cit.} in order to clarify the hypotheses.
\begin{itemize}
    \item First, it should be noted that since the aforementioned result is far more general than the case considered, it is only stated that the result applies for $p$ suitably large, depending on the group. As stated in Remark 6.17 of \textit{loc. cit.}, the assumption on the prime $p$ is in place in order to ensure disjointedness of $\g$ and $\g^*$ as Galois modules. However, since we are working with $\op{G}=\op{GL}_2$ and have made the simplifying assumption that the projective image of $\bar{\rho}$ contains $\op{PSL}_2(\F_p)$, it is an easy exercise to show that the Galois modules $\g$ and $\g^*$ are irreducible and non-isomorphic. Hence, this requirement on the prime $p$ is automatic in our simplified setting and applies for $p\geq 5$. It is for the same reason that the assumption $[\tilde{\Q}(\mu_p):\tilde{\Q}]\geq a_G$ is in place, and this assumption may be dropped when the modules $\g$ and $\g^*$ are irreducible and non-isomorphic as Galois modules.
    \item Since $\op{PSL}_2(\F_p)$ is not solvable, it follows that $\op{\rho}_{|\op{G}_{\Q(\mu_p)}}$ is absolutely irreducible.
    \item The results Proposition 6.8 and Theorem 6.9 replace Ramakrishna's original Selmer vanishing argument for $\op{GL}_2$. These assumptions are thus no longer necessary when $\op{G}=\op{GL}_2$, since the original result of Ramakrishna (see \cite[Lemma 16]{ravi2}) applies. This is indeed stated in the last two lines of \cite[p. 46]{FKP}.
    \item It follows from Proposition \ref{prop46} that at each place $v\in S$, we may choose a lift $r_v$ of $\bar{\rho}_{|v}$ satisfying $\mathcal{C}_v$ with $\det r_v=\psi_{|v}$.
    \item The local deformation $\rho_{|p}$ is arranged to satisfy the local condition $\mathcal{C}_p$. This goes back to the original construction of Ramakrishna \cite{ravi1, ravi2}. In the more general setting where $\op{G}$ is a general group, there is no suitable analog for the liftable condition $\mathcal{C}_p$, which is why instead in \textit{loc. cit.}, the local deformation $\rho_{|p}$ is arranged to be in a chosen component of the generic fiber of a local lifting ring.
\end{itemize}   Therefore, the results in \cite{FKP} specialize to the case considered in the Theorem.
\end{proof}
\section{Modular deformations with large $\lambda$-invariant}\label{s5}
In this section, we prove the main results of the paper. Recall that $\chi$ denotes the cyclotomic character and set $\bar{\chi}$ for its mod-$p$ reduction. When there is no cause for confusion, $\bar{\chi}_{|p}$ will simply be denoted $\bar{\chi}$. The following result is used to prove Theorem \ref{th52}.

\begin{Th}\label{th51}
Let $p\geq 5$ and $\F$ a finite field of characteristic $p$. Let $f_1$ be a Hecke eigencuspform of weight $k\geq 2$ on $\Gamma_1(N_1)$ and let $\bar{\rho}:=\bar{\rho}_{f_1}:\op{Gal}(\bar{\Q}/\Q)\rightarrow \op{GL}_2(\F)$ be the residual representation. Assume that the following conditions are satisfied:
\begin{enumerate}
    \item $f_1$ is either $p$-ordinary or $p\nmid N_1$ and $p\geq k$ (also referred to as condition $(\star)$).
    \item The projective image of $\bar{\rho}$ contains $\op{PSL}_2(\F_p)$.
    \item The local representation $\bar{\rho}_{|p}$ is not twist equivalent to the trivial representation or an unramified indecomposable representation of the form $\mtx{1}{\ast}{0}{1}$.
\end{enumerate} Let $n>0$ be a positive integer. Then, $\bar{\rho}$ lifts to a representation 
\[\rho_{f_2}:\op{Gal}(\bar{\Q}/\Q)\rightarrow \op{GL}_2(\mathcal{O})\] such that $f_2$ is a Hecke eigencuspform on $\Gamma_1(N_2)$ of weight $k$. Here, $\mathcal{O}$ is the valuation ring of a finite extension of $\op{W}(\F)[p^{-1}]$. Furthermore, the following conditions are satisfied:
\begin{enumerate}
    \item $f_2$ is either $p$-ordinary or $p\nmid N_2$,
    \item there is a set of primes $v_1,\dots, v_n$ such that $v_i\equiv 1 \mod{p}$ and $\bar{\rho}_{|v_i}$ is trivial and the primes $v_1,\dots, v_n$ divide $N_2$ but not $N_1$. 
\end{enumerate}
\end{Th}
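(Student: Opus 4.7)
The strategy is to apply Theorem \ref{techtheorem} to produce a geometric lift of $\bar\rho$ with prescribed ramified local behaviour at $n$ auxiliary primes, and then to identify the resulting representation as the Galois representation of a Hecke eigencuspform via the Fontaine-Mazur conjecture as established by Taylor and Kisin.

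First, set the target determinant $\psi:=\det\rho_{f_1}$ and let $L$ be the compositum of $\Q(\mu_p)$ and the splitting field of $\bar\rho$. By Chebotarev's density theorem applied to $\op{Gal}(L/\Q)$, one may choose $n$ distinct primes $v_1,\dots,v_n$ splitting completely in $L$ and coprime to $N_1 p$. Any such $v_i$ satisfies $v_i\equiv 1\pmod p$, $\bar\rho_{|v_i}=\op{Id}$, and $v_i\nmid N_1$.

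Next, at each $v_i$, prescribe a local lift as follows. Since $v_i\equiv 1\pmod p$, the tame quotient of the inertia subgroup of $\op{G}_{v_i}$ surjects onto $\Z_p$, so there is a tamely ramified character $\eta_i:\op{G}_{v_i}\to 1+p\op{W}(\F)$ that is non-trivial modulo $p^2$. The restriction $\psi_{|v_i}$ is unramified (as $v_i\nmid N_1 p$) and reduces to the trivial character, so the diagonal lift
\[ r_{v_i}:=\mtx{\eta_i}{0}{0}{\psi_{|v_i}\eta_i^{-1}} \]
reduces to $\bar\rho_{|v_i}=\op{Id}$, has determinant $\psi_{|v_i}$, and is non-trivially tamely ramified through $\eta_i$. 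Now invoke Theorem \ref{techtheorem} with $S:=\{p\}\cup\{v\mid N_1\}\cup\{v_1,\dots,v_n\}$, $\Omega:=\{v_1,\dots,v_n\}$, the lifts $r_{v_i}$, and $N\geq 2$ (so that the congruence $\rho_{|v_i}\equiv r_{v_i}\pmod{\varpi^N}$ preserves non-triviality on inertia). The three numbered hypotheses of Theorem \ref{techtheorem} are contained in the present assumptions; the output is a geometric lift $\rho:\op{G}_{\Q,\tilde S}\to\op{GL}_2(\mathcal{O})$ of $\bar\rho$ with $\det\rho=\psi$, satisfying the local condition $\mathcal{C}_p$ at $p$.

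Finally, appeal to the Fontaine-Mazur conjecture to identify $\rho$ with the representation attached to a Hecke eigencuspform, and verify the listed conclusions. Since $\bar\rho=\bar\rho_{f_1}$ is modular, $\bar\rho$ is absolutely irreducible on $\op{G}_{\Q(\mu_p)}$ (by the $\op{PSL}_2(\F_p)$-image hypothesis), and condition (3) places us outside the exceptional case not covered by Taylor \cite{taylor} and Kisin \cite{kisin}, so $\rho=\rho_{f_2}$ for some normalised Hecke eigencuspform $f_2$. The Hodge-Tate weights $\{0,k-1\}$ of $\rho_{|p}$ are determined by $\det\rho_{|p}=\psi_{|p}$ together with the local shape $\mathcal{C}_p$, so $f_2$ has weight $k$. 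If $\mathcal{C}_p$ is the ordinary condition (case (1) of Proposition \ref{prop46}), then $f_2$ is $p$-ordinary; if $\mathcal{C}_p$ is crystalline (case (2)), then $p\nmid N_2$. Ramification of $\rho_{|v_i}$ is forced by the congruence with the non-trivially ramified $r_{v_i}$, hence $v_i\mid N_2$, while $v_i\nmid N_1$ by construction. The principal technical hurdle is arranging tamely ramified local lifts $r_{v_i}$ that remain non-trivial on inertia modulo a small power of $\varpi$, so that the congruence output by Theorem \ref{techtheorem} translates to genuine ramification of $\rho$ at each $v_i$; this is resolved by the explicit diagonal construction above together with the freedom to choose $N$ arbitrarily large.
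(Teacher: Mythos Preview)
Your overall strategy matches the paper's: choose auxiliary primes $v_1,\dots,v_n$ split in $\Q(\bar\rho)\cdot\Q(\mu_p)$, prescribe ramified local lifts there, feed everything into Theorem~\ref{techtheorem}, and invoke Kisin to obtain modularity. The difficulty is that your specific local lifts $r_{v_i}$ do not exist.

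You claim that because tame inertia at $v_i$ surjects onto $\Z_p$, there is a tamely ramified character $\eta_i:\op{G}_{v_i}\to 1+p\op{W}(\F)$ non-trivial modulo $p^2$. But a character of $\op{G}_{v_i}$ (not merely of inertia) must be compatible with Frobenius conjugation: writing $\sigma,\tau$ for Frobenius and a tame generator with $\sigma\tau\sigma^{-1}=\tau^{v_i}$, any one-dimensional $\eta_i$ satisfies $\eta_i(\tau)=\eta_i(\tau)^{v_i}$, i.e.\ $\eta_i(\tau)^{v_i-1}=1$. Since $p\mid v_i-1$, this forces $\eta_i(\tau)$ to be $p$-power torsion. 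For $p$ odd the group $1+p\op{W}(\F)$ (indeed all of $\op{W}(\F)^\times=\mu_{|\F|-1}\times(1+p\op{W}(\F))$) is $p$-torsion-free, so $\eta_i(\tau)=1$ and your $r_{v_i}$ is unramified. The diagonal construction therefore cannot produce a ramified lift with values in $\op{GL}_2(\op{W}(\F))$, which is what Theorem~\ref{techtheorem} requires.

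The paper circumvents this by using a genuinely two-dimensional (non-abelian) lift: one sets
\[
r_{v_i}(\sigma)=\bigl(\psi(\sigma)v_i^{-1}\bigr)^{1/2}\mtx{v_i}{0}{0}{1},\qquad r_{v_i}(\tau)=\mtx{1}{y}{0}{1}
\]
with $p\mid y$, $p^2\nmid y$; the tame relation $\sigma\tau\sigma^{-1}=\tau^{v_i}$ is then satisfied because conjugation by $\mtx{v_i}{0}{0}{1}$ multiplies the upper-right entry by $v_i$. This unipotent-on-inertia lift is ramified modulo $p^2$ and lives in $\op{GL}_2(\op{W}(\F))$. A secondary point: once one passes to $\mathcal{O}$, the uniformizer $\varpi$ may have $p\in\varpi^e\mathcal{O}$ with $e>1$, so ``$N\geq 2$'' is not enough; one needs $N>e$ (equivalently $p\notin\varpi^N$), as the paper specifies, to guarantee that $r_{v_i}\bmod\varpi^N$ remains ramified.
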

\begin{proof}
Denote by $\Q(\bar{\rho})$ the field fixed by the kernel of $\bar{\rho}$ and let $L$ be the composite $L:=\Q(\bar{\rho})\cdot \Q(\mu_p)$. Let $\Pi$ be the set of prime numbers $v$ that split in $L$. By the Chebotarev density theorem, the density of $\Pi$ is equal to $[L:\Q]^{-1}$. In particular, the set $\Pi$ is infinite. Choose primes $v_1, \dots, v_n\in \Pi$ not dividing $N_1$. Note that $\bar{\rho}$ is unramified at $v_i$ and $\bar{\rho}_{\restriction v_i}$ is the trivial representation since $v_i$ splits in $\Q(\bar{\rho})$. Since $v_i$ splits in $\Q(\mu_p)$, we have that $v_i\equiv 1\mod{p}$. At each prime $v_i$, we will specify a lift \[r_{v_i}:\op{G}_{v_i}\rightarrow \op{GL}_2(\op{W}(\F))\] of $\bar{\rho}_{|v_i}$ which is ramified modulo $p^2$ and $\op{det}r_{v_i}=\psi_{|v_i}$. Since $\bar{\rho}_{\restriction v_i}$ is trivial, the extension of $\Q_{v_i}$ fixed by the kernel of $r_{v_i}$ is pro-$p$ and hence tamely ramified. In other words, $r_{v_i}$ must factor through the maximal tamely ramified pro-$p$ quotient of $\op{G}_{v_i}$. This quotient is a semi-direct product generated by two elements $\sigma$, the Frobenius, and $\tau$, a choice of generator for tame pro-$p$ inertia. These elements are subject to a single relation $\sigma \tau \sigma^{-1}=\tau^{v_i}$. Specifying a lift $r_{v_i}$ amounts to specifying matrices $r_{v_i}(\sigma), r_{v_i}(\tau)\in \widehat{\op{GL}_2}(\op{W}(\F))$ such that 
\[r_{v_i}(\sigma) r_{v_i}(\tau) r_{v_i}(\sigma)^{-1}=r_{v_i}(\tau)^{v_i}.\] Let $y\in \op{W}(\F)$ be such that $p| y$ and $p^2\nmid y$. Pick a square root $v_i^{1/2}$ of $v_i$ in $\Z_p$ which is $1\mod{p}$. Note that since $\bar{\rho}(\sigma)=\mtx{1}{0}{0}{1}$, we have that $\bar{\psi}(\sigma)=\op{det}\bar{\rho}(\sigma)=1$. Set 
\[r_{v_i}(\sigma)=(\psi(\sigma)v_i^{-1})^{\frac{1}{2}}\mtx{v_i}{0}{0}{1}\text{, and }r_{v_i}(\tau)=\mtx{1}{y}{0}{1}.\] Since $v_i\equiv 1\mod{p}$, the matrix $(\psi(\sigma)v_i^{-1})^{\frac{1}{2}}\mtx{v_i}{0}{0}{1}$ lies in $ \widehat{\op{GL}_2}(\op{W}(\F))$. The relation is satisfied and this specifies $r_{v_i}$. Let $\Omega$ be the set of primes $\{v_1, \dots, v_n\}\subset \Pi$ chosen. Let $S$ be the set consisting of the following primes $v$
\begin{enumerate}
    \item $v=p$,
    \item $\bar{\rho}_{|v}$ is ramified,
    \item $v\in \Omega$.
\end{enumerate} The conditions of Theorem \ref{techtheorem} are satisfied and hence there is a lift
\[\rho:\op{Gal}(\bar{\Q}/\Q)\rightarrow \op{GL}_2(\mathcal{O})\] with determinant $\psi$ and which is geometric in the sense of Definition \ref{geometricdef}. Here, $\mathcal{O}$ is an integral extension of $\op{W}(\F)$ which depends only on $\{r_v\}_{v\in \Omega}$. Pick $N$ large enough such that $p\notin \varpi^N$. For this choice of $N$, the representation $\rho$ (which depends on $N$) satisfies the congruence \[\rho_{|v}\equiv r_{v}\mod{\varpi^N}\] for all $v\in \Omega$. For $v\in \Omega$, $r_{v}$ is chosen to be ramified modulo $p^2$. Since $p\notin \varpi^N$, it follows that $r_{v}\mod{\varpi^N}$ is ramified for $v\in \Omega$. Hence, $\rho_{|v}$ is ramified for $v\in \Omega$.
\par Note that since $f_1$ has weight $k$, the character $\psi$ is the product of a finite order character with $\chi^{k-1}$. The determinant of $\rho$ is equal to $\psi$. The representation $\rho_{|p}$ satisfies $\mathcal{C}_p$, hence is ordinary or crystalline (or both). Since $\bar{\rho}_{|p}$ is stipulated to not be twist equivalent to $\mtx{\bar{\chi}}{\ast}{0}{1}$, the assumptions of the main result of \cite{kisin} are satisfied and hence, $\rho$ arises from a Hecke eigencuspform $f_2$ on $\Gamma_1(N_2)$ of weight $k$. Since $\rho$ is $p$-ordinary or $p$-crystalline (or both), it follows that $f_2$ is either $p$-ordinary or $p\nmid N_2$. Since $\rho$ is ramified at the primes $v_1,\dots , v_n$, it follows that $v_1,\dots, v_n$ all divide $N_2$. Also recall that the primes $v_1,\dots , v_n$ were chosen to not divide $N_1$.
\end{proof}

\begin{Th}\label{th52}
Let $p\geq 5$ and $f_1$ a Hecke eigencuspform of weight $k\geq 2$ on $\Gamma_1(N_1)$ and let $\bar{\rho}:=\bar{\rho}_{f_1}:\op{Gal}(\bar{\Q}/\Q)\rightarrow \op{GL}_2(\F)$ be the residual representation. Assume that the following conditions are satisfied:
\begin{enumerate}
    \item if $f_1$ is not $p$-ordinary, then $p$ does not divide $N_1$ and $p\geq k$,
    \item the projective image of $\bar{\rho}$ contains $\op{PSL}_2(\F_p)$,
    \item $\bar{\rho}_{|p}$ is not twist equivalent to the trivial representation or an unramified indecomposable representation of the form $\mtx{1}{\ast}{0}{1}$.
    \item If $k=2$ and $f_1$ is $p$-supersingular with $a_p(f_1)\neq 0$, then, assume that Hypothesis \ref{hypothesis torsion} is satisfied for $f_1$.
    \item $\mu_p(f_1)=0$ (resp. $\mu_p^{+}(f_1)=0,\mu_p^{-}(f_1)=0$) if $f_1$ is (resp. is not) $p$-ordinary. 
\end{enumerate} Let $n>0$ be a positive integer. Then, there is another Hecke eigencuspform $f_2$ of weight $k$ on $\Gamma_1(N_2)$ such that
\begin{enumerate}
    \item $f_2$ is $\mathfrak{p}$-congruent to $f_1$ for some prime $\mathfrak{p}|p$,
    \item $f_2$ is either $p$-ordinary or $p\nmid N_2$.
    \item If $k=2$ and $f_2$ is $p$-supersingular with $a_p(f_2)\neq 0$, then, assume that Hypothesis \ref{hypothesis torsion} is satisfied for $f_2$. Then, with this additional condition in place, we have that $\lambda_p(f_2)\geq n$ (resp. $\lambda_p^{+}(f_2),\lambda_p^{-}(f_2)\geq n$) if $f_2$ is (resp. is not) $p$-ordinary. 
\end{enumerate}
\end{Th}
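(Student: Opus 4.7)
The plan is to simply chain together Theorem \ref{th51}, Lemma \ref{congruencelemma}, Theorem \ref{th35}, and Corollary \ref{cor36}; essentially all the serious content has already been assembled in the previous sections. First I would apply Theorem \ref{th51} to $f_1$: its hypotheses are a direct subset of those of Theorem \ref{th52}, so it produces a Hecke eigencuspform $f_2$ on $\Gamma_1(N_2)$ of weight $k$ lifting $\bar{\rho}=\bar{\rho}_{f_1}$, together with $n$ primes $v_1,\dots,v_n$ satisfying $v_i\equiv 1\pmod{p}$, $\bar{\rho}_{|v_i}$ trivial, and $v_i\mid N_2$ but $v_i\nmid N_1$. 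Because $\rho_{f_2}$ is a lift of $\bar{\rho}$, the two forms share the same residual representation, and hence are $\mathfrak{p}$-congruent for the prime $\mathfrak{p}\mid p$ of the coefficient field of $f_2$. Theorem \ref{th51} also guarantees that $f_2$ is either $p$-ordinary or satisfies $p\nmid N_2$, so $f_2$ satisfies condition $(\star)$.

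Next I would verify that Hypothesis \ref{mainhyp} holds for the pair $(f_1,f_2)$. Both forms satisfy $(\star)$ as noted. Hypothesis \ref{hypothesis torsion} is built into the standing hypotheses of Theorem \ref{th52} in the sole delicate case $k=2$ with $p$-supersingular $a_p\neq 0$, and is otherwise automatic (for instance via \cite[Theorem 1.4]{LLZ} or \cite[Proposition 6.4]{Lei}). The irreducibility of $\bar{\rho}$ follows at once from the fact that its projective image contains the nonabelian simple group $\op{PSL}_2(\F_p)$, which is valid for $p\geq 5$. Lemma \ref{congruencelemma} then ensures that $f_1$ and $f_2$ share the same ordinary/supersingular dichotomy, so the sign $\ddag$ has consistent meaning on both sides under Convention \ref{convention}.

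Given Hypothesis \ref{mainhyp} together with $\mu^\ddag(f_1)=0$, Theorem \ref{th35} yields both $\mu^\ddag(f_2)=0$ and the equality of imprimitive invariants $\lambda^{\ddag,\Sigma}(f_1)=\lambda^{\ddag,\Sigma}(f_2)$. By construction, the primes $v_1,\dots,v_n$ satisfy the two conditions of Corollary \ref{cor36} ($\bar{\rho}_{|v_i}$ trivial and $v_i\mid N_2$, $v_i\nmid N_1$), which therefore gives
\[
\lambda^\ddag(f_2)-\lambda^\ddag(f_1)\geq n.
\]
Since $\lambda^\ddag(f_1)\geq 0$, this produces the required bound $\lambda^\ddag(f_2)\geq n$.

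The main obstacle is not any new mathematical content but the verification that the lift furnished by Theorem \ref{th51} is genuinely ramified at each $v_i$ (so that $v_i\mid N_2$ is forced), that it preserves the weight $k$, and that the ordinary versus non-ordinary status is arranged compatibly so that Convention \ref{convention} makes the notation $\lambda^\ddag$ unambiguous across $f_1$ and $f_2$. All three points are already addressed in Theorem \ref{th51} and Lemma \ref{congruencelemma}; the substantive Galois-theoretic lifting work is hidden in Theorem \ref{techtheorem}, and the Selmer-theoretic comparison of $\mu$- and imprimitive $\lambda$-invariants across a congruence is the content of Theorem \ref{th35}.
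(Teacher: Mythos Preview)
Your proposal is correct and follows precisely the same route as the paper: apply Theorem~\ref{th51} to produce the congruent eigenform $f_2$ with the desired ramification at $v_1,\dots,v_n$, and then invoke Corollary~\ref{cor36} (underpinned by Theorem~\ref{th35}) to conclude $\lambda^{\ddag}(f_2)\geq n$. Your write-up is in fact more careful than the paper's terse proof in verifying that Hypothesis~\ref{mainhyp} holds for the pair $(f_1,f_2)$ and that the ordinary/non-ordinary status matches via Lemma~\ref{congruencelemma}.
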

\begin{Remark}
This result is shown to follow from Corollary \ref{cor36} and Theorem \ref{th51}. Note that Convention \ref{convention} is used in the statement of Corollary \ref{cor36} but not in the above Theorem.
\end{Remark}
\begin{proof}
The eigencuspform $f_1$ satisfies the conditions of Theorem \ref{th51} and hence there is an eigencuspform $f_2$ such that $\rho_{f_2}$ lifts the residual representation $\bar{\rho}$. As a result the eigencuspforms $f_1$ and $f_2$ are $\mathfrak{p}$ congruent for a certain choice of prime $\mathfrak{p}|p$. It follows from Corollary \ref{cor36} that $\lambda_p(f_2)\geq n$ (resp. $\lambda_p^{+}(f_2),\lambda_p^{-}(f_2)\geq n$) if $f$ is (resp. is not) $p$-ordinary. 
\end{proof}

The next Theorem gives an example for each prime $p\geq 7$ and $p\nmid N$, thus it gives an infinite collection of examples which arise from a single modular form.
\begin{Th}\label{lastresult}
Let $p\geq 7$ be any prime and $n$ any positive integer. Let $f_1$ be a non-CM newform of weight $3$, level $M=27$ and and nebentypus $\epsilon(a)=\left(\frac{-3}{a}\right)$. Assume that $\mu_p(f_1)=0$ (resp. $\mu_p^{+}(f_1)=0,\mu_p^{-}(f_1)=0$) if $f_1$ is (resp. is not) $p$-ordinary. There is a Hecke eigencuspform $f$ of weight $3$ on $\Gamma_1(N)$ such that
\begin{enumerate}
    \item if $f$ is not $p$-ordinary, then $p\nmid N$.
    \item If $f_1$ is $p$-ordinary, then so is $f$ and $\lambda_p(f)\geq n$.
    \item If $f$ is not $p$-ordinary, then $f$ is not $p$-ordinary and $p\nmid N$. Furthermore, we have that $\lambda_p^+(f), \lambda_p^-(f)\geq n$.
\end{enumerate} The choice of $f$ depends on $p$ and $n$.
\end{Th}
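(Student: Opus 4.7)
The plan is to apply Theorem \ref{th52} directly to $f_1$ and take $f := f_2$ from its conclusion. Given $p \geq 7$ and $n > 0$, I must verify that the five hypotheses of Theorem \ref{th52} hold for this specific $f_1$ (level $N_1 = 27$, weight $k = 3$). Conditions (1) and (4) are automatic: $p \geq 7$ forces $p \nmid 27$ and $p \geq 3 = k$, and since $k = 3$ the Hypothesis \ref{hypothesis torsion} holds unconditionally in the non-ordinary case by the results of Lei and Lei--Loeffler--Zerbes recorded after Hypothesis \ref{hypothesis torsion}. Condition (5) on the vanishing of the relevant $\mu$-invariant is assumed in the statement. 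Thus everything reduces to verifying the two deformation-theoretic hypotheses (2) and (3) of Theorem \ref{th52}: the projective image of $\bar{\rho}_{f_1}$ contains $\op{PSL}_2(\F_p)$, and $\bar{\rho}_{f_1|p}$ is not twist-equivalent to the trivial representation or an unramified indecomposable representation $\mtx{1}{\ast}{0}{1}$.

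For the image condition, the non-CM assumption on $f_1$ lets us invoke the theorem of Ribet--Momose on the images of the residual representations attached to a non-CM newform: for all but finitely many primes $p$, the image of $\bar{\rho}_{f_1}$ is as large as possible, and in particular its projective image contains $\op{PSL}_2(\F_p)$. For this particular $f_1$ (level $27$, weight $3$, quadratic nebentypus $\epsilon = \bigl(\tfrac{-3}{\cdot}\bigr)$), one checks directly from the explicit Hecke eigenvalues of $f_1$ that there are no inner twists beyond the obvious one coming from $\epsilon$, and that the conclusion of Ribet--Momose in fact holds at every $p \geq 7$ (the finitely many small primes are handled by a finite computation with Fourier coefficients).

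For the local condition (3), I split into cases. If $f_1$ is $p$-ordinary, then by Hypothesis $(\star)$ we have $\bar{\rho}_{f_1}|_{I_p} \cong \mtx{\bar{\chi}^{k-1}}{\ast}{0}{1} = \mtx{\bar{\chi}^{2}}{\ast}{0}{1}$; for $p \geq 7$, the character $\bar{\chi}^2$ is a nontrivial ramified character of $I_p$ of order $>2$, so the ratio of the diagonals cannot be killed by any twist, and in particular $\bar{\rho}_{f_1|p}$ is not twist-equivalent to either of the two excluded local shapes. If $f_1$ is $p$-supersingular, then $\bar{\rho}_{f_1}|_{I_p}$ acts through a pair of fundamental characters of level $2$, so it is already absolutely irreducible on inertia, and no twist can make it reducible, let alone of the excluded unramified indecomposable form.

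Having verified conditions (1)--(5), Theorem \ref{th52} produces a Hecke eigencuspform $f_2$ of weight $3$ on $\Gamma_1(N_2)$ which is $\mathfrak{p}$-congruent to $f_1$ for some $\mathfrak{p} \mid p$, satisfies $p \nmid N_2$ in the non-ordinary case, and has $\lambda_p(f_2) \geq n$ (resp.\ $\lambda_p^\pm(f_2) \geq n$) in the ordinary (resp.\ non-ordinary) case. Lemma \ref{congruencelemma} ensures that $f_2$ is $p$-ordinary exactly when $f_1$ is, matching the dichotomy in the statement. The main obstacle in carrying this out rigorously is the verification of the projective largeness (2) at \emph{every} prime $p \geq 7$ rather than merely cofinitely many, since Ribet--Momose is asymptotic: one must pin down an effective $p_0$ and then dispatch the finite list of primes in $[7, p_0]$ by direct computation with the Fourier expansion of the level-$27$, weight-$3$ newform $f_1$.
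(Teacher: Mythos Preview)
Your approach is exactly the paper's: verify the hypotheses of Theorem~\ref{th52} for this particular $f_1$, then set $f:=f_2$ and invoke Lemma~\ref{congruencelemma} to transfer ordinarity. Your treatment of conditions (1), (3), (4), (5) matches the paper's essentially word for word (the paper's argument for (3) is the same dichotomy you give: either $\bar{\rho}_{|I_p}$ is irreducible via level-$2$ fundamental characters, or $\bar{\rho}_{|I_p}=\mtx{\bar{\chi}^2}{\ast}{0}{1}$, and $\bar{\chi}^2\neq 1$ for $p\geq 7$).

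The only divergence is condition (2), where you invoke the asymptotic Ribet--Momose theorem and then flag the residual effectivity problem as an obstacle. The paper sidesteps this entirely: it cites \cite[section~1.2]{zywina}, where Zywina analyzes precisely this weight-$3$, level-$27$ newform with nebentypus $\left(\tfrac{-3}{\cdot}\right)$ and establishes that the projective image contains $\op{PSL}_2(\F_p)$ for every prime $p\geq 7$. So the ``main obstacle'' you identify is already resolved in the literature by a direct reference, and no Ribet--Momose argument or case-by-case computation is needed.
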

\begin{proof}
Let $\bar{\rho}$ be the residual representation of $\rho_{f_1}$, for a certain choice of prime $\mathfrak{p}|p$. It follows from \cite[section 1.2]{zywina} that the projective image of $\bar{\rho}$ contains $\op{PSL}_2(\F_p)$. Furthermore, since $k=3$ and, it follows that $\bar{\rho}$ is not twist equivalent to $\mtx{1}{\ast}{0}{1}$. This is because $\bar{\rho}$ is either irreducible or $\bar{\rho}_{|I_p}=\mtx{\bar{\chi}^2}{\ast}{0}{1}$. Note that $p\nmid M$ since $p\neq 3$. According to Theorem \ref{th52}, there is an eigencuspform $f_2$ which is $\mathfrak{p}$-congruent to $f_1$ such that the $\lambda$-invariant is $\geq n$. Since $f_1$ and $f_2$ are $\mathfrak{p}$-congruent, it follows from Lemma \ref{congruencelemma} that $f_1$ is $p$-ordinary if and only if $f_2$ is $p$-ordinary as well. Set $f$ to denote $f_2$.
\end{proof}

\section*{Funding}
The author is supported by the CRM-Simons fellowship.

\section*{Acknowledgments}
The author thanks Jeffrey Hatley, Antonio Lei and Florian Sprung for helpful comments with regards to an earlier draft of the paper. He would also like to extend his sincerest thanks to the anonymous referee who has done an excellent and thorough job in reviewing the manuscript.

\end{document}